\newtheorem{thm}{Theorem}
\newtheorem{lema}[thm]{Lemma}
\newtheorem{coro}[thm]{Corollary}
\newtheorem*{thm*}{Theorem}
\newcommand{\Q}{\mathbb{Q}}
\newcommand{\R}{\mathbb{R}}
\newcommand{\s}{\mathbb{S}}
\newcommand{\eqr}[1]{\mbox{(\ref{eq:#1})}}
\newcommand{\ie}{i.e.\ }
\newcommand{\C}{\mathbb{C}}
\newcommand{\V}{\widetilde{V}}
\begin{document}

\title[Ghost Classes]{Ghost classes in $\mathbb{Q}$-rank two orthogonal Shimura varieties} 
\date{\today}
\author{Jitendra Bajpai}
\address{Mathematisches Institut, Georg-August Universit\"at G\"ottingen, Bunsenstrasse 3-5, D-37073 Germany}
\email{jitendra@math.uni-goettingen.de}

\author{Matias V. Moya Giusti} 
\address{IHES, 35 Route de Chartres, 91440 Bures-sur-Yvette, France}
\email{moya@ihes.fr}

\subjclass[2010]{Primary: 14G35, 14D07;  Secondary: 14M27}  
\keywords{Shimura varieties, Ghost classes, Mixed Hodge structures}
\begin{abstract}

In this article, the existence of ghost classes for the Shimura varieties associated to algebraic groups of orthogonal similitudes of signature $(2, n)$ is investigated. We make use of the study of the weights in the mixed Hodge structures associated to the corresponding cohomology spaces and results on the Eisenstein cohomology of the algebraic group of orthogonal similitudes of signature $(1, n-1)$. For the values of $n = 4, 5$ we prove the non-existence of ghost classes for most of the irreducible representations (including most of those with an irregular highest weight). For the rest of the cases, we prove strong restrictions on the possible weights in the space of ghost classes and, in particular,  we show that they satisfy the weak middle weight property.  
\end{abstract}

\maketitle
  
\section{Introduction}\label{intro} 
Let $(\mathrm{G}, X)$ be a Shimura pair, and let $\rho:\mathrm{G}  \rightarrow \mathrm{GL}(V)$ be an irreducible finite dimensional representation (not necessarily defined over $\mathbb{Q}$). For every open compact subgroup $K_f \subset \mathrm{G}(\mathbb{A}_f)$ of the group of finite adelic points of $\mathrm{G}$, we consider the level variety
\[
    S_K = \mathrm{G}(\mathbb{Q}) \backslash X \times (\mathrm{G}(\mathbb{A}_f) / K_f)
\]
and we denote by $S$ the projective limit, over the directed set of open compact subgroups, of these level varieties (i.e. the space of complex points of the corresponding Shimura variety).
One can define in a natural way a local system $\V$ on the Shimura variety $S$ associated to $(\mathrm{G}, X)$, underlying a variation of complex Hodge structure of a given weight $wt(V)$.

Let $\mathrm{A} \subset \mathrm{G}$ be a maximal $\Q$-split torus and $\mathrm{T} \subset \mathrm{G}$ a maximal torus defined over $\Q$ such that $\mathrm{A} \subset \mathrm{T}$. We choose  systems of positive roots in the corresponding root systems $\Phi(\mathrm{G}, \mathrm{T}), \Phi(\mathrm{G}, \mathrm{A})$ so that they are compatible, i.e. the restriction to $\mathrm{A}$, of a positive root in $\Phi(\mathrm{G}, \mathrm{T})$ is either zero or positive in $\Phi(\mathrm{G}, \mathrm{A})$. Let $\lambda : \mathrm{T}(\C) \longrightarrow \C^{\times}$ be the highest weight of $V$. We will usually denote $V$ by $V_\lambda$. The choice of the system of positive roots $\Phi^+(\mathrm{G}, \mathrm{A})$ in $\Phi(\mathrm{G}, \mathrm{A})$ defines a set of standard proper $\Q$-parabolic subgroups denoted by $\mathcal{P}_\Q(\mathrm{G})$.

From now on we will assume that the semisimple $\Q$-rank of $\mathrm{G}$ is $2$. In this case $\mathcal{P}_\Q(\mathrm{G})$ consists of three elements: two maximal $\mathbb{Q}$-parabolic subgroups denoted by $\mathrm{P}_1$ and $\mathrm{P}_2$, and a minimal $\mathbb{Q}$-parabolic subgroup denoted by $\mathrm{P}_0$.

We consider the Borel-Serre compactification $\overline{S}$ of $S$ (see \cite{BoSe73}). The inclusion $S \hookrightarrow \overline{S}$ is a homotopy equivalence and $\V_\lambda$ can be extended naturally to $\overline{S}$.  The corresponding local system will again be denoted by $\V_\lambda$. In fact there is a natural isomorphism $H^\bullet(S, \V_\lambda) \cong H^\bullet(\overline{S}, \V_\lambda)$ and as a consequence we obtain a long exact sequence in cohomology
\begin{equation} \label{eq:first}
\ldots \rightarrow H_c^q(S, \V_\lambda) \rightarrow H^q(S, \V_\lambda) \xrightarrow{r^q} H^q(\partial \overline{S}, \V_\lambda) \rightarrow \ldots
\end{equation}  
where $H_c^\bullet(S, \V_\lambda)$ denotes the cohomology with compact support and $\partial \overline{S} = \overline{S} - S$ is the boundary of the Borel-Serre compactification.

On the other hand, we have a covering $\partial \overline{S} = \cup_{\mathrm{P} \in \mathcal{P}_\mathbb{Q}(\mathrm{G})}\partial_{\mathrm{P}}$, where this union is indexed by the elements of $\mathcal{P}_\mathbb{Q}(\mathrm{G})$. The aforementioned covering induces a spectral sequence in cohomology abutting to $H^\bullet(\partial \overline{S}, \V_\lambda)$ and in the case of $\mathbb{Q}$-rank $2$ this is just a long exact sequence in cohomology
\begin{equation} \label{eq:second}
\ldots \rightarrow H^q(\partial \overline{S}, \V_\lambda) \xrightarrow{p^q} H^q(\partial_{\mathrm{P}_1}, \V_\lambda) \oplus H^q(\partial_{\mathrm{P}_2}, \V_\lambda) \rightarrow H^q(\partial_{\mathrm{P}_0}, \V_\lambda) \rightarrow \ldots
\end{equation}  

We define the space of $q$-ghost classes by $Gh^q(\V_\lambda) = Im(r^q) \cap Ker(p^q)$. Both long exact sequences in cohomology~\eqr{first} and~\eqr{second} are long exact sequences of mixed Hodge structures (see \cite{HaZu-II-94}).

For each $i \in \left\{0, 1, 2\right\}$ there is a decomposition (see \cite{Sch90}, Section $7.2$):
\begin{equation} \label{eq:decomposition}
H^q(\partial_{\mathrm{P}_i}, \V_\lambda) = \bigoplus_{w \in \mathcal{W}^{\mathrm{P}_i}} Ind_{\mathrm{P}_i(\mathbb{A}_f)}^{\mathrm{G}(\mathbb{A}_f)} H^{q-\ell(w)}(S^{\mathrm{M}_i}, \widetilde{W}_{w_\ast(\lambda)})
\end{equation} 
obtained by using Kostant's theorem (see~\cite{Kostant61}), where the induction is the algebraic (unnormalized) induction, $\mathcal{W}^{\mathrm{P}_i}$ is the set of Weyl representatives associated to $\mathrm{P}_i$, $S^{\mathrm{M}_i}$ is the symmetric space associated to the Levi quotient $\mathrm{M}_i$ of $\mathrm{P}_i$, $\ell(w)$ denotes the length of the element $w$ and $W_{w_\ast(\lambda)}$ is the irreducible representation of $\mathrm{M}_i$ with highest weight $w_\ast(\lambda)$ (see Section ~\ref{Decomposition} for the definition of $w_\ast(\lambda)$).

The mixed Hodge structure on $H^q(\partial_{\mathrm{P}_i}, \V_\lambda)$ splits completely with respect to the aforementioned decomposition (see Remark $5.5.6$ of \cite{HaZu-II-94}). Moreover, for $i \in\{1, 2\}$ there exists a subset $\mathcal{W}_i^0 \subset \mathcal{W}(\mathrm{G}, \mathrm{T})$ such that $\mathcal{W}_i^0 \mathcal{W}^{\mathrm{P}_i} = \mathcal{W}^{\mathrm{P}_0}$ and the corresponding  morphism in cohomology $r_i:H^\bullet(\partial_{\mathrm{P}_i}, \V_\lambda) \rightarrow H^\bullet(\partial_{\mathrm{P}_0}, \V_\lambda)$ restricted to $Ind_{\mathrm{P}_i(\mathbb{A}_f)}^{\mathrm{G}(\mathbb{A}_f)} H^{q-\ell(w)}(S^{\mathrm{M}_i}, \widetilde{W}_{w_\ast(\lambda)})$ (with $w \in \mathcal{W}^{\mathrm{P}_i}$) has image in 
\[
\bigoplus_{\widetilde{w} \in \mathcal{W}_i^0} Ind_{_{\mathrm{P}_0(\mathbb{A}_f)}}^{^{\mathrm{G}(\mathbb{A}_f)}} H^{q-\ell(w)-\ell(\widetilde{w})}(S^{\mathrm{M}_0}, \widetilde{W}_{(\widetilde{w}w)_\ast(\lambda)}).
\]

In the cases to be studied in this article, $S^{\mathrm{M}_0}$ has non trivial cohomology only in degree zero, and when $i=0$, the mixed Hodge structure of each term in~\eqref{eq:decomposition} has a pure weight. 

The idea behind this paper is the fact that the space $Ker(p^q)$ (involved in the definition of ghost classes) is the image of the connecting homomorphism $ H^{q-1}(\partial_{\mathrm{P}_0}, \V_\lambda) \rightarrow H^q(\partial \overline{S}, \V_\lambda)$ from the long exact sequence \eqref{eq:second} and, after \eqref{eq:decomposition}, we have a list of possible weights in the corresponding space of ghost classes. By using mixed Hodge theory and Eisenstein cohomology, a study of the morphisms $r^\bullet:H^\bullet(S, \V_\lambda) \rightarrow H^\bullet(\partial \overline{S}, \V_\lambda),$  $r_i:H^\bullet(\partial_{\mathrm{P}_i}, \V_\lambda) \rightarrow H^\bullet(\partial_{\mathrm{P}_0}, \V_\lambda)$ is used to rule out most of the possible weights in the space of ghost classes.

Ghost classes were introduced by A. Borel~\cite{Borel84} in 1984.  Later on, G. Harder mentioned these classes several times in his work.  At the very end in the article~\cite{Harder90}, Harder refers to the case of $\mathrm{GL}_3$ and said \emph{ ``... the ghost classes appear if some $L$-values vanish. The order of vanishing does not play a role. But this may change in the higher rank case"}. 
He further added that this aspect is worth investigating. Not to mention much, this gives a nice motivation to pursue the study of ghost classes further and specially in higher rank groups. Since then, though some mathematicians have studied them, the general theory of these classes has been slow in coming. 
\par

Ghost classes can be introduced for any reductive algebraic group and their definition does not depend on the existence of a complex structure. In the case of a Shimura variety, the space of ghost classes is equipped with  mixed Hodge structure. It is then interesting to study the nontriviality of the space of ghost classes for a Shimura variety and to give some description of the possible weights in its mixed Hodge structure. When $S$ is a Shimura variety, the local system $\V_\lambda$ defines a complex variation of Hodge structure of a certain weight $wt(V_\lambda)$  (see \cite{Zu81} for this notion) and it is known that the weights in the mixed Hodge structure on the space $H^q(S, \V_\lambda)$ are greater than or equal to the middle weight, given by $q+wt(V_\lambda)$ (see Theorem $2.2.7$ of \cite{Harris94}). Therefore the weights in the mixed Hodge structure on the space of ghost classes are greater than or equal to the middle weight. We say that the Shimura variety satisfies the \emph{weak middle weight property} if for every finite dimensional highest weight representation $V_\lambda$ of $\mathrm{G}$ and every nonnegative integer $q$, the only possible weights in the mixed Hodge structure on the space of $q$-ghost classes, in $H^{q}(\partial \overline{S}, \V_\lambda)$, are the middle weight and the middle weight plus one. In addition, the Shimura variety is said to satisfy the \emph{middle weight property} if, for every choice of highest weight $\lambda$ and every nonnegative integer $q$, the only possible weight in the space of ghost classes in $H^q(\partial \overline{S}, \V_\lambda)$ is the middle weight. 

The middle weight property is expected to be true by the experts, but there is no proof of this fact for the moment. We were unable to trace down the attribution of this conjecture in the literature and therefore we consider it a \emph{folklore conjecture}. Recently, the second author has provided a strong support for the (weak) middle weight property by a thorough study of the cases of the Shimura varieties associated to $\mathrm{GSp}(4)$ in~\cite{MM1} and $\mathrm{GU}(2,2)$ in~\cite{MM2}.

In this article, we present a study of the Shimura varieties associated to the groups of orthogonal similitudes $\mathrm{GO}(2, n)$ for $n \geq 3$. The study of ghost classes is discussed in detail, in the last two sections, for the cases $n=4$ and $n=5$. For example, in the case of $n=5$ (see Theorem ~\ref{Thm1}) we obtain the following result: 

\begin{thm*}
Let $V_\lambda$ be the finite dimensional irreducible representation with highest weight $\lambda = a_1 \epsilon_1 + a_2 \epsilon_2 + a_3 \epsilon_3 + c \kappa$. One has:\vspace{-0.1cm}
\begin{enumerate}
\item If $a_2 \neq 0$ then there are no ghost classes in the cohomology space $H^\bullet(\partial \overline{S}, \V_\lambda)$.\\
\vspace{-0.2cm}
\item If $a_2 = 0$ (which implies $a_3=0$ and therefore in terms of fundamental weights one has $\lambda = a_1 \varpi_1 + c \kappa$), then the only possible weights in the mixed Hodge structure of the space of ghost classes are the middle weight and the middle weight plus one.
\end{enumerate}
\end{thm*}

\par We obtain a similar result in the case $n=4$ (see Theorem~\ref{Thm2}). When the highest weight $\lambda$ of the irreducible representation is regular, one can obtain the non-existence of ghost classes by combining Theorem 4.11,~\cite{Sch83} and Theorem 19,~\cite{FRANKE1981}. In this article, we take a step further and prove the non-existence of ghost classes for most of the irregular highest weights. In the remaining cases we restrict the list of degrees in cohomology in which there could exist ghost classes and prove that there is, in each degree, only one possible weight in their corresponding mixed Hodge structure which is in all cases the middle weight or the middle weight plus one.  

\section{The Shimura Variety Involved}\label{tsv}

In this section we present the family of Shimura varieties to be studied. Throughout the article, $n \geq 3$. We denote by $\mathbb{G}_m$ the multiplicative group and by $\mathbb{S}$ the restriction of scalars $Res_{\mathbb{C}/\mathbb{R}}\mathbb{G}_m$. That is, 
\begin{equation}
    \mathbb{S}(F) = \mathbb{G}_m(F \otimes_\mathbb{R} \mathbb{C}) \quad \mbox{ for every } \mathbb{R} \mbox{-algebra } F. \nonumber
\end{equation}
and in particular $\mathbb{S}(\mathbb{R}) = \mathbb{C}^\times$ is the multiplicative group of $\mathbb{C}$. We denote by $z, \bar{z}:\mathbb{S}(\mathbb{C}) \rightarrow \mathbb{C}^\times$ the algebraic characters of $\mathbb{S}(\mathbb{C})$ such that the composition of $\mathbb{C}^\times = \mathbb{S}(\mathbb{R}) \hookrightarrow \mathbb{S}(\mathbb{C})$ with them are respectively the identity and the complex conjugation. 
Consider the Shimura pair $(\mathrm{GO}(2, n), X)$, where $\mathrm{GO}(2, n)$ is the group of orthogonal similitudes of signature $(2,n)$ defined by 
\[ \mathrm{GO}(2,n)(A) =  \left\{ g \in \mathrm{GL}_{n+2}(A) \mid g^t I_{2, n} g = \nu(g) I_{2, n}, \nu(g) \in A^\times \right\}, \]
for every $\Q \mbox{-algebra } A$, where $I_{2, n} = -2Id_2 \times Id_{n-2} \times 2Id_2$ and $X$ is the $\mathrm{GO}(2, n)(\R)$-conjugacy class of homomorphisms containing the element $h:\s(\mathbb{R}) \longrightarrow \mathrm{GO}(2, n)(\R)$ given by
\[ 
h(x+iy) = \left[ \begin{array}{cccccc}
x^2 - y^2 & 2xy & & & &\\
-2xy & x^2 - y^2 & & & & \\
 & & x^2 + y^2 & & & \\
 & & & \ddots & & \\
 & & & & x^2 + y^2 & \\
 & & & & & x^2 + y^2 \end{array} \right] \quad \forall (x+iy) \in \mathbb{S}(\mathbb{R}).
 \]

Thus, the weight morphism $\omega_{_X}: \mathbb{G}_m \longrightarrow \mathrm{GO}(2, n)$ of the Shimura pair is given by $\omega_{_X}(t) = t^2 Id_{n+2}$ where $Id_{n+2}$ denotes the identity in dimension $n+2$.

The choice of $I_{2, n}$ may seem a bit artificial, but we are using it only to get the description of $h$ and being able to work with the more canonical quadratic form defined below by $J_n$. In fact, what follows is also valid for general orthogonal groups of signature $(2, n)$ but we will keep working with this particular orthogonal group in order to give an explicit description of this case.

For the description of the parabolic subgroups it is better to consider the algebraic group $\mathrm{G}_n$ that is isomorphic, as an algebraic group defined over $\Q$, to $\mathrm{GO}(2, n)$, given by
\[
	\mathrm{G}_n(A) =  \left\{ g \in \mathrm{GL}_{n+2}(A) \mid g^t J_n g = \nu(g) J_n, \nu(g) \in A^\times \right\}, \mbox{ for every } \Q \mbox{-algebra } A\,,
\]
where 
\[
J_n = \left[ \begin{array}{ccccc}
 & & & & 1\\
 & & & 1 & \\
 & & Id_{n-2} & & \\
 & 1 & & & \\
 1 & & & & \end{array} \right].
\]
In fact, it can be verified that the conjugation, inside $\mathrm{GL}_{n+2}$, by the element
\[
D = \left[ \begin{array}{ccccc}
 1 & 0 & 0 & 0 & 1\\
 0 & 1 & 0& 1 & 0\\
 0 & 0 & Id_{n-2} & 0 & 0 \\ 
 0 & -1 & 0 & 1 & 0 \\
 -1 & 0 & 0 & 0 & 1 \end{array} \right],
\]
gives an isomorphism between the groups $\mathrm{GO}(2, n)$ and $\mathrm{G}_n$ defined over $\Q$. We introduced the group $\mathrm{GO}(2, n)$ because it allows to give an explicit and simple description of $h$. From now on we will work with the group $\mathrm{G}_n$ (and in this setting, the corresponding morphism $\mathbb{S}(\mathbb{R}) \rightarrow \mathrm{G}_n(\mathbb{R})$ is given by $z \mapsto Dh(z)D^{-1}$).

We denote by $\mathbb{A}_f$ the ring of finite adeles and by $K_\infty$ the centralizer  in $\mathrm{G}_n(\mathbb{R})$ of the morphism $DhD^{-1}:\mathbb{S}(\mathbb{R}) \rightarrow \mathrm{G}_n(\mathbb{R})$. Let $K_f \subset \mathrm{G}_n(\mathbb{A}_f)$ be an open compact subgroup, we denote $K = K_\infty \times K_f \subset \mathrm{G}_n(\mathbb{A})$ and define by $$S_K = \mathrm{G}_n(\mathbb{Q}) \backslash \mathrm{G}_n(\mathbb{R}) \times \mathrm{G}_n(\mathbb{A}_f) / K_\infty \times K_f$$ its corresponding level variety and by \[S = \lim_{{\overset{\longleftarrow}{K}}} S_K\] the space of complex points of the Shimura variety defined by this Shimura datum.

\section{Root system, $\Q$-Parabolic Subgroups and Irreducible Representations} \label{rsystem}

Consider the maximal $\Q$-split torus 
\[
\mathrm{A} = \left\{h \left[ \begin{array}{ccccc}
 h_1 & 0 & 0 & 0 & 0\\
 0 & h_2 & 0 & 0 & 0\\
 0 & 0 & Id_{n-2} & 0 & 0 \\ 
 0 & 0 & 0 & h_2^{-1} & 0 \\
 0 & 0 & 0 & 0 & h_1^{-1} \end{array} \right] : h, h_1, h_2 \in \mathbb{G}_m \right\} \subset \mathrm{G}_n
\] 
Let $\mathfrak{a}$ and $\mathfrak{g}_n$ denote the Lie algebra of $\mathrm{A}$ and $\mathrm{G}_n$, respectively. The corresponding $\mathbb{Q}$-root system $\Phi(\mathfrak{g}_n, \mathfrak{a})$ is of type $B_2$ and $\Delta_\mathbb{Q}=\left\{\varepsilon_1 - \varepsilon_2, \varepsilon_2\right\}$, where $\varepsilon_1, \varepsilon_2 \in \mathfrak{a}^\ast$ denote the usual elements, is a system of simple roots. This determines a set of proper standard $\mathbb{Q}$-parabolic subgroups $\mathcal{P}(G_{n})_\mathbb{Q}=\left\{\mathrm{P}_0, \mathrm{P}_1, \mathrm{P}_2 \right\}$, given by
\[ 
\mathrm{P}_1(\C) = \left\{ \left[ \begin{array}{ccccc}
\ast & \ast & \ldots & \ast & \ast \\
0 & \ast & \ldots & \ast & \ast \\
\vdots & \vdots & \ddots& \vdots & \vdots \\
0 & \ast & \ldots & \ast & \ast \\ 
0 & 0 & \ldots & 0 & \ast \end{array}  \right] \in \mathrm{GL}(n+2, \mathbb{C}) \right\} \cap \mathrm{G}_n(\mathbb{C}),
\]
\[ 
\mathrm{P}_2(\C) = \left\{ \left[ \begin{array}{ccccccc}
\ast & \ast & \ast & \ldots & \ast & \ast & \ast \\
\ast & \ast & \ast & \ldots & \ast & \ast & \ast \\
0 & 0 & \ast & \ldots & \ast & \ast & \ast \\
\vdots & \vdots & \vdots &\ddots & \vdots & \vdots & \vdots \\
0 & 0 & \ast & \ldots & \ast & \ast & \ast \\ 
0 & 0 & 0 & \ldots & 0 & \ast & \ast \\ 
0 & 0 & 0 & \ldots & 0 & \ast & \ast \end{array}  \right] \in \mathrm{GL}(n+2, \mathbb{C}) \right\} \cap \mathrm{G}_n(\mathbb{C})
\]
and $\mathrm{P}_0 = \mathrm{P}_1 \cap \mathrm{P}_2$. Let $\mathrm{A}_{\mathrm{P}_0}, \mathrm{A}_{\mathrm{P}_1}, \mathrm{A}_{\mathrm{P}_2} \subset \mathrm{A}$ be the following $\mathbb{Q}$-subtori:
\[ 
\mathrm{A}_{\mathrm{P}_1} = 
\left\{h \left[ \begin{array}{ccccc}
 h_1 & 0 & 0 \\
 0 & Id_{n} & 0 \\ 
 0 & 0 & h_1^{-1} \end{array} \right] : h, h_1 \in \mathbb{G}_m \right\},
\]
\[ 
\mathrm{A}_{\mathrm{P}_2} = \left\{h \left[ \begin{array}{ccccc}
 h_2Id_2 & 0 & 0 \\
 0 & Id_{n-2} & 0 \\ 
 0 & 0 & h_2^{-1}Id_2 \end{array} \right] : h, h_2 \in \mathbb{G}_m \right\},
\]
and $\mathrm{A}_{\mathrm{P}_0}= \mathrm{A}$. Finally, for $i \in \left\{0, 1, 2\right\}$, the Levi quotient $\mathrm{M}_i$ of $\mathrm{P}_i$ is canonically isomorphic to the centralizer $\mathrm{Z}_{\mathrm{G}_n}(\mathrm{A}_{\mathrm{P}_i})$ of $\mathrm{A}_{\mathrm{P}_i}$ in $\mathrm{G}_n$ (so we will use the same notation $\mathrm{M}_i$ for both groups).
One can see that over $\C$ the group $\mathrm{G}_n$ is isomorphic to the group of orthogonal similitudes $\mathrm{GO}(n+2)$ of matrices preserving the quadratic form defined by the matrix
\[
\left[ \begin{array}{ccccc}
 0 & 0 & 0 & 0 & 1\\
 0 & 0 & 0 & 1 & 0\\
 0 & 0 & \reflectbox{$\ddots$} & 0 & 0 \\ 
 0 & 1 & 0 & 0 & 0 \\
 1 & 0 & 0 & 0 & 0 \end{array} \right]
\]
(in dimension $n+2$) up to a scalar multiple. An isomorphism between $\mathrm{G}_n$ and $\mathrm{GO}(n+2)$ can be established by conjugation by a certain matrix of the form
\[ 
\left[ \begin{array}{ccccc}
 1 & 0 & 0 & 0 & 0\\
 0 & 1 & 0 & 0 & 0\\
 0 & 0 & M & 0 & 0 \\ 
 0 & 0 & 0 & 1 & 0 \\
 0 & 0 & 0 & 0 & 1 \end{array} \right]
\]
where $M \in \mathrm{GL}_{n-2}(\C)$ is given by
\[ 
\frac{1}{\sqrt{2}} \left[ \begin{array}{cccccc}
 1 & 0 & 0 & 0 & 0 & -i\\
 0 & \ddots & 0 & 0 & \reflectbox{$\ddots$} & 0\\
 0 & 0 & 1 & -i & 0 & 0\\ 
 0 & 0 & 1 & i & 0 & 0 \\ 
 0 & \reflectbox{$\ddots$} & 0 & 0 & \ddots & 0 \\
 1 & 0 & 0 & 0 & 0 & i \end{array} \right]  \quad \mbox{and}  \quad  \frac{1}{\sqrt{2}} \left[ \begin{array}{ccccccc}
 1 & 0 & 0 & 0 & 0 & 0 & -i\\
 0 & \ddots & 0 & 0 & 0 & \reflectbox{$\ddots$} & 0\\
 0 & 0 & 1 & 0 & -i & 0 & 0\\
 0 & 0 & 0 & 1 & 0 & 0 & 0\\  
 0 & 0 & 1 & 0 & i & 0 & 0 \\ 
 0 & \reflectbox{$\ddots$} & 0 & 0 & 0 & \ddots & 0 \\
 1 & 0 & 0 & 0 & 0 & 0 & i \end{array} \right],
\] 
if $n$ is even and odd respectively. The point is the following. We will study the cohomology spaces of the Shimura variety $S$ with respect to the local systems defined by absolutely irreducible representations of $\mathrm{G}_n$, that is by representations of $\mathrm{G}_n$ that are irreducible over $\mathbb{C}$. These are therefore the same as the absolutely irreducible representations of $\mathrm{GO}(n+2)$. On the other hand the classification of the irreducible representations of $\mathrm{GO}(n+2)$ is easier to obtain. There is a canonical maximal torus $\mathrm{T}$ in $\mathrm{GO}(n+2)$ which is given by the subgroup of all its diagonal matrices. It is clear that, under the aforementioned isomorphism $\mathrm{G}_n \cong \mathrm{GO}(n+2)$ the maximal $\Q$-split torus $\mathrm{A}$ is contained in $\mathrm{T}$ (this is important because of the compatibility condition between $\mathrm{A}$ and $\mathrm{T}$ enunciated in the introduction). Let $\mathfrak{t}$ be the Lie algebra associated to $\mathrm{T}$ then $\mathfrak{t}$ is given by all the diagonal elements in the Lie algebra $\mathfrak{g}=\mathfrak{go}(n+2)$ corresponding to $\mathrm{GO}(n+2)$. For the study of the corresponding root system and the irreducible representations we need to treat the cases $n$ odd and $n$ even separately. In what follows, $\mathfrak{t}_\mathbb{C}$ is the Lie algebra of $\mathrm{T}(\mathbb{C})$.

\subsection{Case $n$ odd} \label{rsystemodd}

$\mathrm{T}$ is a torus of dimension $l + 1$, where $l = \frac{n+1}{2}$. Now, we describe the irreducible finite dimensional representations of $\mathrm{G}_n$. One can see that $\mathfrak{go}(n+2)_\mathbb{C} = \mathfrak{so}(n+2)_\mathbb{C} \oplus \mathbb{C} Id_{n+2}$. On the other hand, let $\mathfrak{t}'_\mathbb{C} \subset \mathfrak{so}(n+2)_\mathbb{C}$ be the $l$-dimensional subspace of diagonal matrices. Here $\mathfrak{t}'_\mathbb{C}$ is a Cartan subalgebra of $\mathfrak{so}(n+2)_\mathbb{C}$. We consider the canonical coordinate elements $\epsilon'_1, \ldots, \epsilon'_l \in (\mathfrak{t}_\mathbb{C}')^\ast$. Then one knows that the corresponding root system is of type $B_l$ and $\Delta = \left\{ \epsilon'_1 - \epsilon'_2, \dots, \epsilon'_{l-1} - \epsilon'_l, \epsilon'_l \right\}$ is a system of simple roots. With respect to this choice of system of simple roots, the fundamental weights for $\mathfrak{so}(n+2)$ are given by:
\[
	\varpi_k = \sum_{i=1}^k \epsilon'_i, \quad \mbox{for } 1 \leq k < l \quad \mbox{ and } \quad \varpi_l = \frac{1}{2} \sum_{i=1}^l \epsilon'_i 
\]
and the finite dimensional irreducible representations of $\mathfrak{so}(n+2)$ are determined by their highest weights, given by the expressions of the form $n_1\varpi_1 + \ldots + n_l \varpi_l $ with $n_1, \ldots, n_l \in \mathbb{N}$. One says that such a representation is regular if $n_i > 0$ for all $i \in \left\{1, \ldots, l \right\}$. Only the highest weights with $n_l$ even will correspond to a finite dimensional irreducible representation of $\mathrm{SO}(n+2)$ (see for example, Proposition $3.1.19$ and Theorem $5.5.21$ of \cite{WalGood2009}). In other words, the irreducible finite dimensional representations of $\mathrm{SO}(n+2)$ can be determined by their highest weights and these are given by the elements of the form $a_1 \epsilon'_1 + \ldots + a_l \epsilon'_l$ with $a_1 \geq \ldots \geq a_l \in \mathbb{N}$. With respect to the decomposition $\mathfrak{t}_\mathbb{C} = \mathfrak{t}'_\mathbb{C} \oplus \mathbb{C} Id_{n+2}$, let $\epsilon_i \in \mathfrak{t}_\mathbb{C}^\ast$, for each $i \in \left\{1, \ldots, l\right\}$, be the extension of $\epsilon'_i$ by zero on the second component and let $\kappa \in \mathfrak{t}_\mathbb{C}^\ast$ be the element that is zero in the first component  and such that $\kappa(zId_{n+2}) = z$. From the fact that $\mathrm{GO}(n+2)$ is the direct product of its center $\mathrm{Z}$ ($\cong \mathbb{G}_m$) and $\mathrm{SO(n+2)}$, one can deduce that the finite dimensional irreducible representations of $\mathrm{GO}(n+2)$ are in bijection with the highest weights of the form
 $a_1 \epsilon_1 + \ldots + a_l \epsilon_l + c\kappa$ with $a_1 \geq \ldots \geq a_l \in \mathbb{N}$ and $c \in \mathbb{Z}$.

Finally, with respect to the root system defined by $\mathfrak{t}$, the Weyl group $\mathcal{W}=\mathcal{W}(\mathfrak{go}(n+2)_\mathbb{C}, \mathfrak{t}_\C)$ has $2^l l!$ elements and these elements are given by the composition of a permutation in $S_l$ acting on $\left\{ \epsilon_1, \ldots, \epsilon_l \right\}$ and any possible change of signs on these elements. For a given permutation $\sigma \in S_l$ and $f:\left\{ 1, \ldots, l\right\} \rightarrow \left\{ 1, -1 \right\}$, we denote by $w = w_{\sigma, f}$ the element in $\mathcal{W}$ that takes each $\epsilon_i$ to $f(\sigma(i)) \epsilon_{\sigma(i)}$.

\subsection{Case $n$ even} \label{rsystemeven}

Following a similar procedure, we can determine the irreducible finite dimensional representations of $\mathrm{G}_n$ by their corresponding highest weights. In this case $l = \frac{n+2}{2}$ and $\mathrm{T}$ has dimension $l + 1$. Let $\mathfrak{t}'_\mathbb{C} \subset \mathfrak{so}(n+2)_\mathbb{C}$ be, again, the $l$-dimensional subspace of diagonal matrices, then $\mathfrak{t}'_\mathbb{C}$ is a Cartan subalgebra of $\mathfrak{so}(n+2)_\mathbb{C}$. The corresponding root system is of type $D_l$ and $\Delta = \left\{ \epsilon'_1 - \epsilon'_2, \dots, \epsilon'_{l-1} - \epsilon'_l, \epsilon'_{l-1} + \epsilon'_l \right\}$ is a system of simple roots, where $\epsilon'_1, \ldots, \epsilon'_l \in (\mathfrak{t}_\mathbb{C}')^\ast$ is the canonical base in $(\mathfrak{t}_\mathbb{C}')^\ast$. Therefore the fundamental weights for $\mathfrak{so}(n+2)$ are given by:
\[
	\varpi_k = \sum_{i = 1}^k \epsilon'_i, \quad \mbox{for } 1 \leq k < l-1, \quad \varpi_{l-1} = \frac{1}{2}\left(\sum_{i = 1}^{l-1} \epsilon'_i - \epsilon'_l \right) \mbox{ and } \varpi_l = \frac{1}{2} \sum_{i = 1}^{l} \epsilon'_i
\]
and the finite dimensional irreducible representations of $\mathfrak{so}(n+2)$ are determined by their highest weights, given by the expressions of the form $n_1\varpi_1 + \ldots + n_l \varpi_l $ with $n_1, \ldots, n_l \in \mathbb{N}$. One says that such a representation is regular if $n_i > 0$ for all $i \in \left\{1, \ldots, l \right\}$. Among these highest weights, only those with $n_{l-1}+n_l$ even will correspond to a finite dimensional irreducible representation of $\mathrm{SO}(n+2)$.

In other words, the finite dimensional irreducible representations of $\mathrm{SO}(n+2)$ are determined by their highest weights, that are of the form
$a_1 \epsilon'_1 + \ldots + a_l \epsilon'_l$ where $a_1 \geq \ldots \geq a_{l-1} \geq \left|a_l\right| \in \mathbb{N}$.

In this case $\mathrm{GO}(n+2)$ is the semidirect product of its center $\mathrm{Z}$ and $\mathrm{SO}(n+2)$, and their intersection is $\left\{ \pm Id_{n+2}\right\}$. We define the elements $\epsilon_1, \ldots, \epsilon_l, \kappa \in \mathfrak{t}_\mathbb{C}^\ast$ as in Subsection \ref{rsystemodd}. One can finally deduce that the finite dimensional irreducible representations of $\mathrm{G}_n$ are in bijection with the highest weights of the form $a_1 \epsilon_1 + \ldots + a_l  \epsilon_l + c\kappa$ with $a_1 \geq \ldots \geq a_{l-1} \geq \left| a_l \right| \in \mathbb{N}$ and $c \in \mathbb{Z}$ with $c \equiv a_1+a_2+ \ldots + a_{l}$ (mod $2$), where the congruence modulo $2$ is the compatibility condition between the representation of $\mathrm{SO}(n+2)$ and the character on the center.

The Weyl group $\mathcal{W}$ has $2^{(l-1)} l!$ elements. It is given by all compositions of an element of the group of permutations $S_l$ on $\left\{ \epsilon_1, \ldots, \epsilon_l \right\}$ and a change of sign on an even number of these elements. For a given permutation $\sigma \in S_l$ and $f:\left\{ 1, \ldots, l\right\} \rightarrow \left\{ 1, -1 \right\}$, we use the same notation as in the last subsection to denote the corresponding element $w = w_{\sigma, f}$ in the Weyl group.

\section{Weyl Representatives}\label{wr}

In this section we describe the set of Weyl representatives associated to each standard $\mathbb{Q}$-parabolic subgroup of $\mathrm{G}_n$ as defined in \cite{Kostant61}. $\Delta_i$ will denote the set of roots appearing in the Lie algebra of the unipotent radical of the parabolic subgroup $\mathrm{P}_i$ of $\mathrm{G}_n$, for $i \in \left\{0, 1, 2 \right\}$. Because of the difference between the corresponding Weyl groups, the even and odd cases will be treated separately.

\subsection{Case $n$ odd} \label{weylodd}

We begin with the description of the Weyl representatives for the minimal $\Q$-parabolic subgroup $\mathrm{P}_0$. The roots appearing in the unipotent radical of $\mathrm{P}_0$ are 
\[
\Delta_{0} = \left\{ \epsilon_1 \pm \epsilon_2, \ldots, \epsilon_1 \pm \epsilon_l,  \epsilon_2 \pm \epsilon_3, \ldots, \epsilon_2 \pm \epsilon_l, \epsilon_1, \epsilon_2 \right\} 
\]
and by definition the set of Weyl representatives $\mathcal{W}^{\mathrm{P}_0}$ are the elements $w \in \mathcal{W}$ such that $w(\Phi^-) \cap \Phi^+ \subset \Delta_{0}$, but the elements in $\Phi^{+}$ which are not in $\Delta_{0}$ are $
\Phi^+ \setminus \Delta_{0} = \left\{ \epsilon_m \pm \epsilon_n, \epsilon_m \mid 2 < m < n \leq l \right\}.$ From this fact one can see the following:
\begin{lema}
Let $w_{\sigma, f}$ be an element of the Weyl group $\mathcal{W}$, then $w_{\sigma, f} \in \mathcal{W}^{\mathrm{P}_0}$ if and only if
\begin{enumerate}
 \item  $f(m) = 1 \quad \forall m > 2$, and
 \item $\sigma^{-1}(m) < \sigma^{-1}(n) \quad \mbox{for } 2 < m < n \leq l$.
\end{enumerate} 
\end{lema}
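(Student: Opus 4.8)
The plan is to reinterpret the defining condition of $\mathcal{W}^{P_0}$ as a positivity condition for $w^{-1}$ on the explicit list of roots in $\Phi^+\setminus\Delta_0$, and then to read off the two conditions by a direct computation. First I would observe that the set $w(\Phi^-)\cap\Phi^+$ appearing in the definition is precisely the inversion set $\{\beta\in\Phi^+ : w^{-1}(\beta)\in\Phi^-\}$; indeed writing $\beta=w(\gamma)$ with $\gamma\in\Phi^-$ is equivalent to $w^{-1}(\beta)\in\Phi^-$. Consequently the membership $w\in\mathcal{W}^{P_0}$, i.e.\ $w(\Phi^-)\cap\Phi^+\subseteq\Delta_0$, is equivalent to requiring $w^{-1}(\beta)\in\Phi^+$ for every $\beta\in\Phi^+\setminus\Delta_0$. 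Using the description recorded just above the statement, namely $\Phi^+\setminus\Delta_0=\{\epsilon_m\pm\epsilon_n : 2<m<n\le l\}\cup\{\epsilon_m : 2<m\le l\}$, the whole statement reduces to a finite positivity check on these three families of roots.

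Next I would compute the inverse explicitly. From $w_{\sigma,f}(\epsilon_i)=f(\sigma(i))\,\epsilon_{\sigma(i)}$ one gets, setting $j=\sigma(i)$ and using $f(j)^{-1}=f(j)$, the formula $w_{\sigma,f}^{-1}(\epsilon_j)=f(j)\,\epsilon_{\sigma^{-1}(j)}$. I would then combine this with the positivity rules for the root system $B_l$: a vector $\pm\epsilon_a$ lies in $\Phi^+$ exactly when the sign is $+$; a vector $\epsilon_a-\epsilon_b$ lies in $\Phi^+$ exactly when $a<b$; and a vector $\epsilon_a+\epsilon_b$ (with $a\ne b$) always lies in $\Phi^+$.

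Applying these rules to the three families yields the conditions. For the short roots $\epsilon_m$ with $m>2$ we have $w^{-1}(\epsilon_m)=f(m)\,\epsilon_{\sigma^{-1}(m)}$, which is positive if and only if $f(m)=1$; this gives condition (1). Assuming (1), for the roots $\epsilon_m-\epsilon_n$ with $2<m<n\le l$ we have $w^{-1}(\epsilon_m-\epsilon_n)=\epsilon_{\sigma^{-1}(m)}-\epsilon_{\sigma^{-1}(n)}$, which is positive if and only if $\sigma^{-1}(m)<\sigma^{-1}(n)$; this gives condition (2). Finally, still under (1), the roots $\epsilon_m+\epsilon_n$ are sent to $\epsilon_{\sigma^{-1}(m)}+\epsilon_{\sigma^{-1}(n)}$, which is automatically positive and therefore imposes no further restriction. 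Organising this as two implications completes the equivalence: in the forward direction one tests the short roots first to force (1), then the difference roots to force (2); in the converse one checks that (1) and (2) together send every element of $\Phi^+\setminus\Delta_0$ into $\Phi^+$.

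I expect no genuine obstacle here: once the defining condition is translated into the positivity requirement for $w^{-1}$, the argument is a routine verification. The only points demanding care are the bookkeeping with conventions — the exact inversion formula $w^{-1}(\epsilon_j)=f(j)\,\epsilon_{\sigma^{-1}(j)}$ and the $B_l$ positivity criteria — together with the logical ordering in the forward direction, where condition (1) must be secured before the difference roots can be used to extract condition (2).
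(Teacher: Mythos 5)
Your proof is correct and follows exactly the route the paper intends: the paper states the lemma as an immediate consequence of the description $\Phi^+\setminus\Delta_0=\left\{\epsilon_m\pm\epsilon_n,\ \epsilon_m \mid 2<m<n\le l\right\}$, and your argument --- translating $w(\Phi^-)\cap\Phi^+\subseteq\Delta_0$ into positivity of $w^{-1}$ on $\Phi^+\setminus\Delta_0$, computing $w_{\sigma,f}^{-1}(\epsilon_j)=f(j)\,\epsilon_{\sigma^{-1}(j)}$, and checking the three families of roots against the $B_l$ positivity rules --- is precisely the routine verification the paper leaves to the reader. The bookkeeping (condition (1) from the short roots, condition (2) from the difference roots, no constraint from the sums) is accurate, so nothing needs to be changed.
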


In fact $w_{\sigma, f} \in \mathcal{W}^{\mathrm{P}_0}$ is determined by the values $f(1), f(2), \sigma^{-1}(1)$ and $\sigma^{-1}(2)$. Therefore $ \mathcal{W}^{\mathrm{P}_0}$ has $4(l-1) l$ elements. Observe that the only element in $\Delta_{0}$ which is not in $\Delta_{2}$ is $\epsilon_1 - \epsilon_2$. Then, clearly $\mathcal{W}^{\mathrm{P}_2}$ is the subset of $\mathcal{W}^{\mathrm{P}_0}$ of Weyl elements $w$ such that $\epsilon_1 - \epsilon_2 \notin w(\Phi^-)$. From this fact one can easily see that, for $w_{\sigma, f} \in \mathcal{W}^{\mathrm{P}_0}$, if $f(1) = -1$ and $f(2) = 1$ then $w_{\sigma, f} \notin \mathcal{W}^{\mathrm{P}_2}$. On the other hand, if $f(1) = 1$ and $f(2) = -1$ then for any $\sigma \in S_l$, the corresponding element $w_{\sigma, f} \in \mathcal{W}^{\mathrm{P}_0}$. Moreover we see the following

\begin{lema}
$\mathcal{W}^{\mathrm{P}_2}$ consists of the elements $w_{\sigma, f} \in \mathcal{W}^{\mathrm{P}_0}$ satisfying one of the following conditions
\begin{itemize}
\item $f(1) = 1$ and $f(2) = -1$.
\item $f(1) = f(2) = 1$ and $\sigma^{-1}(1) < \sigma^{-1}(2)$.
\item $f(1) = f(2) = -1$ and $\sigma^{-1}(1) > \sigma^{-1}(2)$.
\end{itemize}
\end{lema}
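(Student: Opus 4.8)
The plan is to leverage the reduction already indicated in the paragraph preceding the statement: since the unipotent radicals satisfy $\Delta_0 \setminus \Delta_2 = \{\epsilon_1 - \epsilon_2\}$, one has $\mathcal{W}^{P_2} \subseteq \mathcal{W}^{P_0}$, and for $w \in \mathcal{W}^{P_0}$ the defining inclusion $w(\Phi^-) \cap \Phi^+ \subset \Delta_2$ is equivalent to the single extra requirement that $\epsilon_1 - \epsilon_2 \notin w(\Phi^-)$. Indeed, $w(\Phi^-) \cap \Phi^+ \subset \Delta_0$ already holds by membership in $\mathcal{W}^{P_0}$, and $\epsilon_1 - \epsilon_2$ is the only positive root lying in $\Delta_0$ but not in $\Delta_2$. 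Because $\epsilon_1 - \epsilon_2 \in \Phi^+$, the condition $\epsilon_1 - \epsilon_2 \notin w(\Phi^-)$ is the same as $w^{-1}(\epsilon_1 - \epsilon_2) \in \Phi^+$. So the whole lemma reduces to deciding, among the $w_{\sigma, f} \in \mathcal{W}^{P_0}$, exactly when $w^{-1}(\epsilon_1 - \epsilon_2)$ is a positive root.

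First I would invert the Weyl element. From $w_{\sigma,f}(\epsilon_i) = f(\sigma(i))\,\epsilon_{\sigma(i)}$ one gets $w_{\sigma,f}^{-1}(\epsilon_j) = f(j)\,\epsilon_{\sigma^{-1}(j)}$, and therefore
\[
w_{\sigma,f}^{-1}(\epsilon_1 - \epsilon_2) = f(1)\,\epsilon_{\sigma^{-1}(1)} - f(2)\,\epsilon_{\sigma^{-1}(2)}.
\]
Writing $a = \sigma^{-1}(1)$ and $b = \sigma^{-1}(2)$, which are automatically distinct, this equals $f(1)\,\epsilon_a - f(2)\,\epsilon_b$, and I would then run through the four sign choices $(f(1), f(2))$, using the standard description of the positive roots of type $B_l$, namely the $\epsilon_i$ together with the $\epsilon_i - \epsilon_j$ and $\epsilon_i + \epsilon_j$ for $i < j$.

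The four cases give the following. If $f(1)=f(2)=1$ the vector is $\epsilon_a - \epsilon_b$, which is positive iff $a < b$; if $f(1)=1$ and $f(2)=-1$ it is $\epsilon_a + \epsilon_b$, positive for every $a \neq b$; if $f(1)=-1$ and $f(2)=1$ it is $-(\epsilon_a + \epsilon_b)$, never positive; and if $f(1)=f(2)=-1$ it is $\epsilon_b - \epsilon_a$, positive iff $b < a$. Translating back through $a = \sigma^{-1}(1)$, $b = \sigma^{-1}(2)$ reproduces precisely the three listed conditions, together with the exclusion of $f(1)=-1, f(2)=1$ already observed before the statement. No genuine difficulty arises here; the only points demanding care are getting the form of $w^{-1}$ right and keeping the signs straight across the four cases, and I would cross-check against the previous lemma's constraint $f(m)=1$ for $m>2$, which ensures that only the data $f(1), f(2), \sigma^{-1}(1), \sigma^{-1}(2)$ enter the analysis.
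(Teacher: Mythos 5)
Your proposal is correct and follows exactly the paper's own route: the paper likewise reduces membership in $\mathcal{W}^{P_2}$ to the single condition $\epsilon_1 - \epsilon_2 \notin w(\Phi^-)$ (since $\epsilon_1-\epsilon_2$ is the only root in $\Delta_0 \setminus \Delta_2$) and then sorts the four sign choices $(f(1), f(2))$, which is precisely your computation of $w^{-1}(\epsilon_1-\epsilon_2) = f(1)\,\epsilon_{\sigma^{-1}(1)} - f(2)\,\epsilon_{\sigma^{-1}(2)}$ made explicit. Your case analysis against the positive roots of type $B_l$ is accurate in all four cases, so the argument is complete.
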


Finally, $\Delta_{1} = \left\{ e_1 \pm e_2, \ldots, e_1 \pm e_l, e_1 \right\}$ and using the above methods, we get the following

\begin{lema}
$\mathcal{W}^{\mathrm{P}_1}$ consists of the elements $w_{\sigma, f} \in \mathcal{W}^{\mathrm{P}_0}$ satisfying the following conditions
\begin{itemize}
\item $f(2) = 1$ and
\item $\sigma^{-1}(2) < \sigma^{-1}(3)$.
\end{itemize}
\end{lema}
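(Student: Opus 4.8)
The plan is to characterize $\mathcal{W}^{P_1}$ as the subset of $\mathcal{W}^{P_0}$ determined by which roots of $\Delta_0$ lie outside $\Delta_1$, exactly mirroring the strategy already used for $\mathcal{W}^{P_2}$. First I would observe that the defining condition $w(\Phi^-)\cap\Phi^+\subset\Delta_1$ can be refined relative to the already-established membership in $\mathcal{W}^{P_0}$: since $\Delta_1\subset\Delta_0$, an element $w_{\sigma,f}\in\mathcal{W}^{P_0}$ lies in $\mathcal{W}^{P_1}$ precisely when no root of the form $w_{\sigma,f}(\beta)$ (for $\beta\in\Phi^-$) lands in the set $\Delta_0\setminus\Delta_1$. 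So the key preliminary computation is to identify $\Delta_0\setminus\Delta_1$. Comparing $\Delta_0=\{\epsilon_1\pm\epsilon_2,\ldots,\epsilon_1\pm\epsilon_l,\epsilon_2\pm\epsilon_3,\ldots,\epsilon_2\pm\epsilon_l,\epsilon_1,\epsilon_2\}$ with $\Delta_1=\{\epsilon_1\pm\epsilon_2,\ldots,\epsilon_1\pm\epsilon_l,\epsilon_1\}$, I read off that the roots in $\Delta_0$ but not in $\Delta_1$ are exactly those involving $\epsilon_2$ without $\epsilon_1$, namely $\{\epsilon_2\pm\epsilon_3,\ldots,\epsilon_2\pm\epsilon_l,\epsilon_2\}$.

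Next I would translate the condition ``$w_{\sigma,f}(\Phi^-)$ avoids $\Delta_0\setminus\Delta_1$'' into constraints on $f$ and $\sigma$. A negative root that could map into $\Delta_0\setminus\Delta_1$ under $w_{\sigma,f}$ must have image of the form $\epsilon_2$ or $\epsilon_2\pm\epsilon_m$ with $m>2$. Since $w_{\sigma,f}$ sends $\epsilon_i\mapsto f(\sigma(i))\epsilon_{\sigma(i)}$, the preimage of $\pm\epsilon_2$ is $\pm\epsilon_{\sigma^{-1}(2)}$ and the preimage of $\pm\epsilon_2\pm\epsilon_m$ is a combination $\pm\epsilon_{\sigma^{-1}(2)}\pm\epsilon_{\sigma^{-1}(m)}$. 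Testing the short root $\epsilon_2$ first: it lies in $w_{\sigma,f}(\Phi^-)$ exactly when $f(2)=-1$, so to exclude it I need $f(2)=1$. This gives the first stated condition. For the root $\epsilon_2-\epsilon_m$ (with $m>2$), having already imposed $f(m)=1$ for $m>2$ from the $\mathcal{W}^{P_0}$ description, the analysis of whether its preimage is a negative root reduces to comparing $\sigma^{-1}(2)$ against $\sigma^{-1}(m)$; the binding case is $m=3$ (the smallest relevant index), yielding the condition $\sigma^{-1}(2)<\sigma^{-1}(3)$, while the conditions for larger $m$ follow automatically from the ordering constraint (2) already present in $\mathcal{W}^{P_0}$.

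The main obstacle I anticipate is the bookkeeping for the mixed roots $\epsilon_2+\epsilon_m$ and $\epsilon_2-\epsilon_m$: I must verify carefully that once $f(2)=1$ and the $\mathcal{W}^{P_0}$-conditions hold, the only genuinely new constraint is $\sigma^{-1}(2)<\sigma^{-1}(3)$, and that all the remaining cases with $m>3$ are either automatically excluded or already handled. The cleanest way to do this is to check that condition (2) of the $\mathcal{W}^{P_0}$ lemma forces $\sigma^{-1}(3)<\sigma^{-1}(m)$ for all $m>3$, so that $\sigma^{-1}(2)<\sigma^{-1}(3)$ propagates to $\sigma^{-1}(2)<\sigma^{-1}(m)$ for every $m>2$, making the single inequality with $m=3$ sufficient. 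I would then conclude by confirming the converse — that any $w_{\sigma,f}\in\mathcal{W}^{P_0}$ satisfying $f(2)=1$ and $\sigma^{-1}(2)<\sigma^{-1}(3)$ indeed sends no negative root into $\Delta_0\setminus\Delta_1$ — which is a direct verification completing the characterization.
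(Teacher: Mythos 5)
Your proposal is correct and follows essentially the same route as the paper: the paper obtains this lemma ``using the above methods,'' i.e.\ exactly the procedure you describe --- identify $\Delta_0\setminus\Delta_1=\left\{\epsilon_2\pm\epsilon_m,\ \epsilon_2 \mid 2<m\leq l\right\}$ and require that $w_{\sigma,f}$ send no negative root into this set, which forces $f(2)=1$ (from the short root $\epsilon_2$) and $\sigma^{-1}(2)<\sigma^{-1}(m)$ for all $m>2$ (from the roots $\epsilon_2-\epsilon_m$). Your reduction of the latter family of inequalities to the single condition $\sigma^{-1}(2)<\sigma^{-1}(3)$, via the ordering constraint $\sigma^{-1}(3)<\cdots<\sigma^{-1}(l)$ already built into $\mathcal{W}^{P_0}$, is precisely what the paper's statement encodes.
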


In particular, if $l = 3$, $w_{\sigma, f} \in \mathcal{W}^{\mathrm{P}_1}$ if $f(2) = f(3) = 1$ and $\sigma \in \left\{id, (12), (123)\right\}$.
One can observe the similarity with the description of the Weyl representatives in Proposition $8$ of \cite{GotGro2009}.

We now describe the Weyl representatives $\mathcal{W}_{i}^{0}$ of $\mathrm{P}_0 \cap \mathrm{M}_i$ in $\mathrm{M}_i$ for $i=1,2$. Using the same methods as above, we determine $\mathcal{W}_2^0 = \left\{w_{e, \mathfrak{1}}, w_{(1, 2), \mathfrak{1}}\right\}$, where $\mathfrak{1}$ denotes here the constant function that takes always the value $1$, and 
\[
\mathcal{W}_1^0 = \left\{w_{\sigma, f} \mid f(m)=1 \quad \forall m \neq 2, \sigma(1)=1 \mbox{ and } \sigma^{-1}(m) < \sigma^{-1}(n) \quad \forall 2 < m < n \leq l \right\}\,.
\]

Note that $\mathcal{W}^{\mathrm{P}_0}=\mathcal{W}_0^i\mathcal{W}^{\mathrm{P}_i}$ for $i=1, 2$.

\subsection{Case $n$ even} \label{weyleven}

In this case, $\Delta_0$ is given by $\left\{ \epsilon_1 \pm \epsilon_k \mid 1 < k \leq l \right\} \cup \left\{ \epsilon_2 \pm \epsilon_k \mid 2 < k \leq l \right\}$. For $2 < k < l$, we see that if $f(k)=-1$ then $w_{\sigma, f} \notin \mathcal{W}^{\mathrm{P}_0}$ (because this element takes the root $-\epsilon_{\sigma^{-1}(k)} - \epsilon_{\sigma^{-1}(l)}$ to a positive root not in $\Delta_0$). In fact we get the following 
\begin{lema} 
$\mathcal{W}^{\mathrm{P}_0}$ is given by all the elements $w_{\sigma, f} \in \mathcal{W}$ satisfying
\begin{enumerate}
\item $f(k) = 1 \quad \mbox{for } 2 < k < l$.
\item $\sigma^{-1}(m) < \sigma^{-1}(n) \quad \mbox{for } 2 < m < n \leq l$.
\end{enumerate} 
\end{lema}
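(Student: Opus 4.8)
The plan is to read off $\mathcal{W}^{P_0}$ directly from the defining property recalled in the odd case, namely that $w\in\mathcal{W}^{P_0}$ exactly when $w(\Phi^-)\cap\Phi^+\subset\Delta_0$. Taking contrapositives, this is equivalent to demanding $w^{-1}(\beta)\in\Phi^+$ for every positive root $\beta\in\Phi^+\setminus\Delta_0$, so the first step is to identify that complementary set. Since in the $D_l$ case the positive roots are $\epsilon_i\pm\epsilon_j$ with $i<j$ and $\Delta_0=\left\{\epsilon_1\pm\epsilon_k\mid 1<k\le l\right\}\cup\left\{\epsilon_2\pm\epsilon_k\mid 2<k\le l\right\}$, one immediately gets $\Phi^+\setminus\Delta_0=\left\{\epsilon_m\pm\epsilon_n\mid 2<m<n\le l\right\}$. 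Thus $w_{\sigma,f}\in\mathcal{W}^{P_0}$ is equivalent to requiring that $w_{\sigma,f}^{-1}(\epsilon_m-\epsilon_n)$ and $w_{\sigma,f}^{-1}(\epsilon_m+\epsilon_n)$ both be positive for all $2<m<n\le l$. The only computation needed is the action of the inverse on the coordinate functionals, $w_{\sigma,f}^{-1}(\epsilon_j)=f(j)\,\epsilon_{\sigma^{-1}(j)}$, whence $w_{\sigma,f}^{-1}(\epsilon_m\pm\epsilon_n)=f(m)\epsilon_{\sigma^{-1}(m)}\pm f(n)\epsilon_{\sigma^{-1}(n)}$.

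The core is then a finite sign-and-order analysis carried out pair by pair for $2<m<n\le l$. Writing $p=\sigma^{-1}(m)$, $q=\sigma^{-1}(n)$ and using that $\epsilon_a+\epsilon_b$ is always positive while $\epsilon_a-\epsilon_b$ is positive precisely when $a<b$, I would examine the four possibilities for the sign pair $(f(m),f(n))$. The pairs $(-1,+1)$ and $(-1,-1)$ make one of the two roots above negative regardless of the order of $p$ and $q$, and so are excluded; the pairs $(+1,+1)$ and $(+1,-1)$ are admissible precisely when $p<q$. Reading this simultaneously over all such pairs yields exactly the two stated conditions: whenever there is an index $n$ with $m<n\le l$ the sign $f(m)$ is forced to be $+1$, which gives $f(k)=1$ for $2<k<l$ (condition (1)), and admissibility then forces $\sigma^{-1}(m)<\sigma^{-1}(n)$ for all $2<m<n\le l$ (condition (2)). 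Since each step is an ``if and only if'', the two conditions are at once necessary and sufficient.

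The point requiring the most care is the treatment of the last index $l$, which is exactly where the $D_l$-specific features enter. Because there is no index larger than $l$, no pair forces $f(l)=+1$, so $f(l)$ stays free (subject only to the global parity constraint built into membership in $\mathcal{W}$, that the total number of sign changes be even); this is why condition (1) excludes only the range $2<k<l$ and not $k=l$. The necessity of (1) is precisely the observation recorded just before the statement: if $f(k)=-1$ for some $2<k<l$, then $w_{\sigma,f}$ sends the negative root $-\epsilon_{\sigma^{-1}(k)}-\epsilon_{\sigma^{-1}(l)}$ to $\epsilon_k-f(l)\epsilon_l$, a positive root (being $\epsilon_k-\epsilon_l$ with $k<l$, or $\epsilon_k+\epsilon_l$, according to the sign of $f(l)$) lying outside $\Delta_0$, so $w_{\sigma,f}\notin\mathcal{W}^{P_0}$. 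I expect the main, though still elementary, obstacle to be the careful bookkeeping of the positivity of $\epsilon_k\pm\epsilon_l$ consistently for both signs of $f(l)$, together with checking that the first two coordinates never produce an obstruction, so that $f(1)$, $f(2)$ and the positions $\sigma^{-1}(1)$, $\sigma^{-1}(2)$ remain entirely unconstrained, in agreement with the statement.
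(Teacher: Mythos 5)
Your proposal is correct and takes essentially the same route as the paper: the paper also works directly from the criterion $w(\Phi^-)\cap\Phi^+\subset\Delta_0$, identifies $\Phi^+\setminus\Delta_0=\left\{\epsilon_m\pm\epsilon_n \mid 2<m<n\le l\right\}$, and records precisely your key necessity observation that $f(k)=-1$ for some $2<k<l$ sends the negative root $-\epsilon_{\sigma^{-1}(k)}-\epsilon_{\sigma^{-1}(l)}$ to a positive root outside $\Delta_0$, with $f(l)$ left unconstrained. Your pairwise sign-and-order case analysis merely spells out the routine verification the paper compresses into ``using the same methods as above.''
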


Also, the only element in $\Delta_0$ which is not in $\Delta_2$ is $\epsilon_1 - \epsilon_2$ and $\Delta_1= \left\{ \epsilon_1 \pm \epsilon_k \mid 1 < k \leq l \right\}$. From these facts we deduce the following

\begin{lema} 
$\mathcal{W}^{\mathrm{P}_2}$ is the subset of $\mathcal{W}^{\mathrm{P}_0}$ consisting of the elements $w_{\sigma, f} \in \mathcal{W}^{\mathrm{P}_0}$ satisfying one of the following conditions :
\begin{enumerate}
\item $f(1)=1$ and $f(2)=-1$.
\item $f(1)=f(2)=1$ and $\sigma^{-1}(1) < \sigma^{-1}(2)$.
\item $f(1)=f(2)=-1$ and $\sigma^{-1}(1) > \sigma^{-1}(2)$.
\end{enumerate}
and $\mathcal{W}^{\mathrm{P}_1}$ is the subset of $\mathcal{W}^{\mathrm{P}_0}$ consisting of the elements $w_{\sigma, f} \in \mathcal{W}^{\mathrm{P}_0}$ satisfying both conditions
\begin{enumerate}
\item $f(2)=1$.
\item $\sigma^{-1}(2) < \sigma^{-1}(3)$.
\end{enumerate}
\end{lema}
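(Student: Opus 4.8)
The plan is to apply the defining characterization of the Weyl representatives: $w\in\mathcal{W}^{P_i}$ exactly when $w(\Phi^-)\cap\Phi^+\subset\Delta_i$, as recalled above for $P_0$. Since $P_0=P_1\cap P_2$ we have $\Delta_1,\Delta_2\subset\Delta_0$, so both $\mathcal{W}^{P_1}$ and $\mathcal{W}^{P_2}$ sit inside $\mathcal{W}^{P_0}$; reading off the sets $\Delta_0,\Delta_1,\Delta_2$ above, $\Delta_0\setminus\Delta_2=\{\epsilon_1-\epsilon_2\}$ and $\Delta_0\setminus\Delta_1=\{\epsilon_2\pm\epsilon_k\mid 2<k\le l\}$. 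Thus for a fixed $w\in\mathcal{W}^{P_0}$ the only thing left to test is whether these extra roots belong to $w(\Phi^-)$. The computation rests on the identity $w_{\sigma,f}^{-1}(\epsilon_j)=f(j)\,\epsilon_{\sigma^{-1}(j)}$, the equivalence $\alpha\in w(\Phi^-)\iff w^{-1}(\alpha)\in\Phi^-$, and the positivity rule in type $D_l$: a root $\pm\epsilon_a\pm\epsilon_b$ with $a\neq b$ is positive iff both signs are $+$, or it is $\epsilon_a-\epsilon_b$ with $a<b$.

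For $\mathcal{W}^{P_2}$, since $\Delta_0\setminus\Delta_2=\{\epsilon_1-\epsilon_2\}$ and $\epsilon_1-\epsilon_2\in\Phi^+$, an element $w\in\mathcal{W}^{P_0}$ lies in $\mathcal{W}^{P_2}$ iff $w^{-1}(\epsilon_1-\epsilon_2)=f(1)\epsilon_{\sigma^{-1}(1)}-f(2)\epsilon_{\sigma^{-1}(2)}\in\Phi^+$. I would then run through the four sign patterns of $(f(1),f(2))$: the pattern $(1,-1)$ gives a sum of two distinct $\epsilon$'s, hence always positive (case 1); the pattern $(-1,1)$ gives minus such a sum, hence always negative and thus excluded; and the equal-sign patterns $(1,1)$ and $(-1,-1)$ are positive exactly when $\sigma^{-1}(1)<\sigma^{-1}(2)$ and $\sigma^{-1}(1)>\sigma^{-1}(2)$ respectively, yielding cases 2 and 3.

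For $\mathcal{W}^{P_1}$, an element $w\in\mathcal{W}^{P_0}$ lies in $\mathcal{W}^{P_1}$ iff $w^{-1}(\epsilon_2\pm\epsilon_k)\in\Phi^+$ for every $k$ with $2<k\le l$, where $w^{-1}(\epsilon_2\pm\epsilon_k)=f(2)\epsilon_{\sigma^{-1}(2)}\pm f(k)\epsilon_{\sigma^{-1}(k)}$. The one point needing care, which I expect to be \emph{the main obstacle}, is the index $k=l$, since the $\mathcal{W}^{P_0}$ conditions pin down $f(k)=1$ only for $2<k<l$ and leave $f(l)$ free. This is harmless: as $f(l)=\pm1$, the unordered pair $\{w^{-1}(\epsilon_2+\epsilon_l),w^{-1}(\epsilon_2-\epsilon_l)\}$ equals $\{f(2)\epsilon_{\sigma^{-1}(2)}+\epsilon_{\sigma^{-1}(l)},\,f(2)\epsilon_{\sigma^{-1}(2)}-\epsilon_{\sigma^{-1}(l)}\}$, the same shape as for $2<k<l$. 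Requiring both members of each such pair to be positive forces $f(2)=1$ (for $f(2)=-1$ the member $-\epsilon_{\sigma^{-1}(2)}-\epsilon_{\sigma^{-1}(k)}$ is manifestly negative) together with $\sigma^{-1}(2)<\sigma^{-1}(k)$ for all $2<k\le l$. Finally, the $\mathcal{W}^{P_0}$ monotonicity $\sigma^{-1}(3)<\cdots<\sigma^{-1}(l)$ shows that this whole family of inequalities reduces to the single one $\sigma^{-1}(2)<\sigma^{-1}(3)$, which is exactly the stated pair of conditions.
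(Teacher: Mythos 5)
Your proof is correct and follows essentially the same route as the paper, which deduces the lemma directly from the characterization $w(\Phi^-)\cap\Phi^+\subset\Delta_i$ by testing the roots in $\Delta_0\setminus\Delta_2=\{\epsilon_1-\epsilon_2\}$ and $\Delta_0\setminus\Delta_1=\{\epsilon_2\pm\epsilon_k \mid 2<k\le l\}$ under $w^{-1}$. You in fact supply details the paper leaves implicit, notably the type-$D_l$ subtlety that $f(l)$ is unconstrained in $\mathcal{W}^{P_0}$, which you resolve correctly via the unordered pair $\{w^{-1}(\epsilon_2+\epsilon_l),\,w^{-1}(\epsilon_2-\epsilon_l)\}$.
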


By similar computations, the sets $\mathcal{W}_2^0$ and $\mathcal{W}_1^0$ are given by $\mathcal{W}_2^0 = \left\{ w_{e, \mathfrak{1}}, w_{(1, 2), \mathfrak{1}}\right\}$ and $$\mathcal{W}_1^0 = \left\{ w_{\sigma, f} \in \mathcal{W}^{\mathrm{P}_0} \mid \sigma(1)=1, f(1)=1\right\}\,.$$

\section{Mixed Hodge Theory}\label{mht}

We now collect some information regarding the weight filtration of the mixed Hodge structure for the cohomology spaces in the long exact sequences~\eqr{first} and~\eqr{second}.

First of all, the weight morphism of the orthogonal Shimura variety associated to $\mathrm{GO}(2, n)$ is given by the morphism $\omega :\mathbb{G}_m \rightarrow \mathrm{GO}(2, n)$ defined by $t \mapsto t^2Id_{n+2}$. Therefore, for a finite dimensional irreducible representation $(\rho_{_\lambda}, V_\lambda)$ with highest weight $\lambda = \sum_{i=1}^l a_i \epsilon_i + c \kappa$, the composition $\rho_{_\lambda} \circ \omega : \mathbb{G}_m \rightarrow \mathrm{GL}(V_\lambda)$ is given by $t \mapsto t^{2c}Id_{n+2}$. Therefore $V_\lambda$ defines a complex variation of Hodge structure of weight $-2c$ and the mixed Hodge structure on the space
$H^q(S, \V_\lambda)$ has weights greater than or equal to $q-2c$ (see Theorem $2.2.7$ of \cite{Harris94}).

We continue by calculating, for each $i \in \left\{0, 1, 2\right\}$, the morphism $h_i:\mathbb{S} \rightarrow \mathrm{G}_{h, i}$ defining a Shimura pair $(\mathrm{G}_{h, i}, h_i)$ where $\mathrm{G}_{h, i}$ is the Hermitian part of the Levi subgroup $\mathrm{M}_i$ of $\mathrm{P}_i$. For this we use the description given in \cite{Harris86} (but one could also use Chapter 4 of~\cite{Pink}). 

First of all, we need to introduce some notation. Given an algebraic representation $\rho:\mathrm{G}_n \rightarrow \mathrm{GL}(V)$ defined over $\mathbb{Q}$ one has:
\begin{itemize}
\item A decreasing filtration $F^\bullet_h V_\mathbb{C}$ of $V_\mathbb{C} = V \otimes_\mathbb{Q} \mathbb{C}$ defined by the composition $\rho \circ h : \mathbb{S} \rightarrow GL(V)$ by 
\[
    F_h^p V_\mathbb{C} = \oplus_{p' \geq p} V^{p', q}
\]
where for every $p, q \in \mathbb{Z}$, $V^{p, q} = \left\{v \in V_\mathbb{C} \mid \rho \circ h(z) v = z^{-p}\bar{z}^{-q} v \right\}$ (where $h, z$ and $\bar{z}$ are as in Section \ref{tsv}). 
\item Every morphism $\chi:\mathbb{G}_m \rightarrow \mathrm{G}_n$ defined over $\mathbb{Q}$, defines an increasing filtration $W_\bullet^\chi V$ given by
\[
    W_n^\chi V = \oplus_{n' \leq n} V_{n'}^\chi
\]
where for every $n \in \mathbb{Z}$, $V_n^\chi = \left\{v \in V \mid \rho \circ \chi(r) v = r^{n}v \right\}$.
\end{itemize}

Let $\omega^{\mathrm{P}_i}$ be the unique admissible Cayley morphism $\omega^{\mathrm{P}_i}: \mathbb{G}_m \rightarrow \mathrm{A}_{\mathrm{P}_i}$ (see Theorem $5.1.3$ of \cite{Harris86}). In particular, this morphims satisfies:
\begin{itemize}
\item For every representation $\rho : \mathrm{G}_n \rightarrow \mathrm{GL}(V)$ defined over $\mathbb{Q}$, the pair of filtrations $(W_\bullet^{\omega^{\mathrm{P}_i}} V, F^\bullet_h V)$ defines a mixed Hodge structure on $V$.
\item Let $\mathrm{U}_i \subset \mathrm{P}_i$ be the unipotent radical and $\mathrm{W}_i \subset \mathrm{U}_i$ be the center of $\mathrm{U}_i$. For the adjoint representation of $\mathrm{G}_n$ on its Lie algebra $\mathfrak{g}_n$, the filtration $W_\bullet^{\omega^{\mathrm{P}_i}} \mathfrak{g}_n$ satisfies that
$(\mathfrak{g}_n)_{-2}^{\omega^{\mathrm{P}_i}}$ is the Lie algebra of $\mathrm{W}_i$, $(\mathfrak{g}_n)_{-1}^{\omega^{\mathrm{P}_i}} \oplus (\mathfrak{g}_n)_{-2}^{\omega^{\mathrm{P}_i}}$ is the Lie algebra of $\mathrm{U}_i$  and $(\mathfrak{g}_n)_0^{\omega^{\mathrm{P}_i}}$ is the Lie algebra of $\mathrm{M}_i$.
\end{itemize}

From now on, for every representation $\rho:\mathrm{G}_n \rightarrow \mathrm{GL}(V)$, we will denote by $W_\bullet^{\mathrm{P}_i} V$ and $V_\bullet^{\mathrm{P}_i}$ the filtration and the graduation on $V$, respectively, defined by the admissible Cayley morphism $\omega^{\mathrm{P}_i}$. 

Once we determine the admissible Cayley morphism by using the aforementioned properties, it will be enough in our case to use 5.1.9 of \cite{Harris86} with the standard representation, given by the natural inclusion $\mathrm{G}_n \hookrightarrow \mathrm{GL}_{n+2}$, to calculate $h_i$.

\subsection{Case $i=1$}

Let $\omega^{\mathrm{P}_1}:\mathbb{G}_m \rightarrow \mathrm{A}_{\mathrm{P}_1}$ be the unique admissible Cayley morphism. Because of the description of $\mathrm{A}_{\mathrm{P}_1}$, there exists $m, k \in \mathbb{Z}$ such that
\[
 \omega^{\mathrm{P}_1}(r) = \left[ \begin{array}{ccccc}
 r^{k+m} & 0 & 0 \\
 0  & r^k Id_n & 0 \\ 
 0 & 0 & r^{k-m} \end{array} \right] \quad \forall r \in \mathbb{C}^\times.
\]
By using the description of the filtration on the Lie algebra $\mathfrak{g}_n$ defined by the  composition of the adjoint representation with $\omega^{\mathrm{P}_1}$ and the fact that in this case, the unipotent radical is commutative (therefore $\mathrm{U}_1 = \mathrm{W}_1$), one finally has $m=-2$.

Now, consider the standard representation given by the inclusion $\mathrm{G}_n \hookrightarrow \mathrm{GL}_{2+n}$ and let $V=\mathbb{Q}^{n+2}$. We have defined $h:\mathbb{S} \rightarrow \mathrm{GO}(2, n)$, so we have to compose this morphism with the conjugation by $D$ in order to work with the group $\mathrm{G}_n$ ($D$ as in Section \ref{tsv}) and consider the filtration defined by $DhD^{-1}$ on $V \otimes_\mathbb{Q} \mathbb{C}$. In particular, to get this filtration one can apply $D$ to the Hodge filtration defined by the morphism $h:\mathbb{S} \rightarrow \mathrm{GO}(2, n) \hookrightarrow \mathrm{GL}_{2+n}$ on $\mathbb{C}^{n+2}$. Then the Hodge filtration $F^\bullet_h V$ on $V_\mathbb{C}$ is defined by the graduation
\[
V^{p, q} = \left\{ \begin{array}{ccccc}
 \langle De_1 - iDe_2 \rangle = \langle e_1 - e_{n+2} - ie_2 + ie_{n+1} \rangle,, \quad \mbox{ if } (p, q) =(0, -2) \\
 \langle De_3, \ldots, De_n, De_{n+1}, De_{n+2} \rangle =\langle e_3, \ldots, e_n, e_{n+1}+e_2, e_{n+2}+e_1 \rangle, \quad \mbox{ if } (p, q) =(-1, -1)  \\
 \langle De_1 + iDe_2 \rangle = \langle e_1 - e_{n+2} + ie_2 - ie_{n+1} \rangle, \quad \mbox{ if } (p, q) =(-2, 0)  
 \end{array} \right.
\]
and the weight filtration $W_\bullet^{\mathrm{P}_1} V$ on $V$ is defined by the graduation
\[
V_j^{\mathrm{P}_1} = \left\{ \begin{array}{ccccc}
 \langle e_{n+2} \rangle , \quad \mbox{ if } j = k+2  \\
 \langle e_2, \ldots, e_{n+1} \rangle, \quad \mbox{ if } j = k  \\
 \langle e_{1} \rangle , \quad \mbox{ if } j=k-2 
 \end{array} \right.
\]

One can see that the Hodge filtration $F^\bullet_h V$ induces on $W_{k-2}V = V_{k-2}^{\mathrm{P}_1}$ the filtration 
\[
F^{j} V^{\mathrm{P}_1}_{k-2} = F^j_h\mathbb{C}^{n+2} \cap (V_{k-2}^{\mathrm{P}_1} \otimes_\mathbb{Q} \mathbb{C}) = \left\{ \begin{array}{ccccc}
 V_{k-2}^{\mathrm{P}_1} \otimes_\mathbb{Q} \mathbb{C} , \quad \mbox{ if } j = -2  \\
 0 , \quad \mbox{ if } j=-1 
 \end{array} \right.
\]
On the other hand, the Hodge filtration must define a Hodge structure of weight $k-2$ on $W_{k-2}V$. This implies that $k = -2$.

Now, by using $5.1.9$ of \cite{Harris86} one finally can see, by using the standard representation, that the morphism $h_1:\mathbb{S} \rightarrow \mathrm{G}_{h, 1} \subset \mathrm{GO}(2, n)$ is given by
\[
h_1(z) = \left[ \begin{array}{ccc}
\left| z \right|^4 & & \ \\
 & \left| z \right|^2 Id_n &  \\
 &  &  1 \end{array}  \right] \,, \quad  \forall z \in \mathbb{S}(\mathbb{R}). 
\] 
In particular, the weight morphism associated to $(\mathrm{G}_{h, 1}, h_1)$ is the morphism $\omega_1:\mathbb{G}_m \rightarrow \mathrm{G}_{h, 1} \subset \mathrm{M}_1$ given by
\[
\omega_1(t) = \left[ \begin{array}{ccc}
t^4 & & \ \\
 & t^2 Id_n &  \\
 &  &  1 \end{array}  \right] = t^2 
 \left[ \begin{array}{ccc}
t^2 & & \ \\
 & Id_n &  \\
 &  &  t^{-2} \end{array}  \right] \,, \quad  \forall t \in \mathbb{G}_m(\mathbb{R}). 
\] 
From this description of the weight morphism $\omega_1$ one can see the following. Let $w \in \mathcal{W}^{\mathrm{P}_1}$ and let $w_\ast(\lambda) = n_1 \epsilon_1 + \ldots + n_l \epsilon_l + c \kappa$ be defined as in Subsection \ref{Decomposition}. if  $W_{w_\ast(\lambda)}$ is the irreducible representation of $\mathrm{M}_i$ with highest weight $w_\ast(\lambda)$, then the mixed Hodge structure on the space $H^q(S^{\mathrm{M}_1}, \widetilde{W}_{w_\ast(\lambda)})$, described in \cite{HaZu-II-94}, has weights greater than or equal to $q -2c - 2n_1$. 

\subsection{Case $i=2$}

In this case, by using the same procedure as in the case $i=1$, one has that $\mathrm{U}_2 \neq \mathrm{W}_2$ and by using the filtration that the Cayley morphism $\omega^{\mathrm{P}_2}$ induces on the Lie algebra of $\mathrm{G}_n$ one can see that 
\[
	\omega^{\mathrm{P}_2}(r) = \left[ \begin{array}{ccc}
r^{k-1} Id_2 & & \\ 
& r^k Id_{n-2} & \\
& & r^{k+1} Id_2  \end{array}  \right] \quad  \forall r \in \mathbb{S}(\mathbb{R}), 
\]
for $k \in \mathbb{Z}$. Now consider the representation of $\mathrm{G}_n$ on $V=\mathbb{Q}^{n+2}$ given by the natural inclusion $\mathrm{G}_n \hookrightarrow \mathrm{GL}_{n+2}$. The property that the pair of filtrations $(W_\bullet^{\omega^{\mathrm{P}_2}} V, F^\bullet_h V)$ defines a mixed Hodge structure on $V$ implies that one has $k=-2$ and finally that $h_2:\mathbb{S} \rightarrow \mathrm{G}_{h, 2} \subset \mathrm{GO}(2, n)$ is given by
\[
	h_{2}(z) = \left[ \begin{array}{ccc}
\left| z \right|^2 \left[ \begin{array}{cc} 
x & y   \\
-y & x \end{array} \right] &  \\
 & \left| z \right|^2 Id_{n-2}   &  \\
 &  &  \left[ \begin{array}{cc} 
x & y   \\
-y & x \end{array} \right] \end{array}  \right] \quad  \forall z=(x+iy) \in \mathbb{S}(\mathbb{R}). 
\] 
Thus, the corresponding weight morphism is given by
\[
	\omega_2(t) = \left[ \begin{array}{ccc}
t^3 Id_2 & & \\
 & t^2 Id_{n-2} & \\
 & & t Id_2 \end{array} \right]
 = t^2 \left[ \begin{array}{ccc}
t Id_2 & & \\
 & Id_{n-2} &  \\
 &  &  t^{-1} Id_2 \end{array}  \right] \quad  \forall t \in \mathbb{G}_m(\mathbb{R}). 
\] 
One can deduce the following. For $w \in \mathcal{W}^{P_2}$ and $w_\ast(\lambda)=n_1 \epsilon_1 + \ldots + n_l \epsilon_l + c \kappa$ defined as in Subsection \ref{Decomposition}, the weights in the mixed Hodge structure associated to $H^q(S^{\mathrm{M}_2}, \widetilde{W}_{w_\ast(\lambda)})$ are greater than or equal to $q -2c - n_1 - n_2$. 

\subsection{Case $i=0$} \label{weight0}
In this case, one has that the parabolic subgroup $\mathrm{P}_0$ is subordinate (in the sence of section $2.2$ of \cite{HaZu-II-94}) to $\mathrm{P}_1$. Then the hermitian part of $\mathrm{P}_0$ is exaclty the hermitian part of $\mathrm{P}_1$ and, for $w \in \mathcal{W}^{\mathrm{P}_0}$ with $w_\ast(\lambda)=n_1 \epsilon_1 + \ldots + n_l \epsilon_l + c \kappa$, the mixed Hodge structure on the space $H^0(S^{\mathrm{M}_0}, \widetilde{W}_{w_\ast(\lambda)})$  has weight equal to $-2c - 2n_1$ (note that $S^{\mathrm{M}_0}$ can only have cohomology in degree zero).

\section{Important Facts}\label{facts}

For notational convenience, we use $\partial_{i}$ in place of $\partial_{\mathrm{P}_{i}}$ for $i\in\{0,1,2\}$. In this section, we explain the methods used to determine when a cohomology class in $H^\ast(\partial_0, \widetilde{V}_\lambda)$ does not contribute to a ghost class in the cohomology of the boundary. From now on, whenever $n$ is clear from the context, we will denote $\mathrm{G}_n$ simply by $\mathrm{G}$.

\subsection{A decomposition of $H^\bullet(\partial_i, \V_\lambda)$}\label{Decomposition}

In this subsection, a well known decomposition of the spaces $H^\bullet(\partial_i, \V_\lambda)$ is introduced. For $w \in \mathcal{W}$, we denote by $\ell(w)$ the length of $w$. For each $i \in \left\{0, 1, 2\right\}$ and $w \in \mathcal{W}^{\mathrm{P}_i}$, we write $w_\ast(\lambda) = w(\lambda + \rho) - \rho \in \mathfrak{h}^\ast$ where $\rho = \frac{1}{2}\sum_{\alpha \in \Phi^+} \alpha$. Then $w_\ast(\lambda)$ is the highest weight associated to an irreducible finite dimensional representation $W_{w_\ast(\lambda)}$ of $\mathrm{M}_i$.  For each $q \in \mathbb{N}$ we have,
\begin{equation} \label{eq:decomp}
H^{q}(\partial_i, \V_\lambda) = \bigoplus_{w \in \mathcal{W}^{\mathrm{P}_i}} Ind_{\mathrm{P}_i(\mathbb{A}_f)}^{\mathrm{G}(\mathbb{A}_f)} H^{q-\ell(w)}(S^{\mathrm{M}_i}, \widetilde{W}_{w_\ast(\lambda)}).
\end{equation}
where $Ind_{\mathrm{P}_i(\mathbb{A}_f)}^{\mathrm{G}(\mathbb{A}_f)}$ denotes the algebraic (unnormalized) induction and $S^{\mathrm{M}_i}$ is the symmetric space associated to $\mathrm{M}_i$. For the rest of this paper we will denote $Ind_{\mathrm{P}_i(\mathbb{A}_f)}^{\mathrm{G}(\mathbb{A}_f)}$ by $Ind_{\mathrm{P}_i}^\mathrm{G}$.

For each $q \in \mathbb{N}$,  let $\mathcal{W}^{\mathrm{P}_i}(q)$ be the set of the elements $w \in \mathcal{W}^{\mathrm{P}_i}$ with $\ell(w)=q$. Since $S^{\mathrm{M}_0}$ can only have nontrivial cohomology in degree $0$,
\begin{equation} \label{eq:Decomp}
H^{q}(\partial_0, \V_\lambda) = \bigoplus_{w \in \mathcal{W}^{\mathrm{P}_0}(q)} Ind_{\mathrm{P}_0}^{\mathrm{G}} H^{0}(S^{\mathrm{M}_0}, \widetilde{W}_{w_\ast(\lambda)}) \,, \quad \forall q \in \mathbb{N}.
\end{equation} 

In order to study $ker(p^q)$ (see~\eqr{second}), we study the image of the map $\delta_q:H^{q-1}(\partial_0, \V_\lambda) \rightarrow H^q(\partial \overline{S}, \V_\lambda)$. Therefore, for each $w \in \mathcal{W}^{P_0}(q-1)$ we study whether the space $Ind_{\mathrm{P}_0}^{\mathrm{G}} H^{0}(S^{\mathrm{M}_0}, \widetilde{W}_{w_\ast(\lambda)})$ is in the kernel of $\delta_q$ and, when this is not the case, whether it could contribute to ghost classes. 

\subsection{Middle weight} \label{MW}
The fact that the weights in the mixed Hodge structure on $H^q(S, \widetilde{V}_\lambda)$ are greater than or equal to $q - 2c$ is strongly used. Note that $-2c$ is the unique weight in the variation of complex Hodge structure defined by $V_\lambda$. If $w \in \mathcal{W}^{\mathrm{P_0}}$ and $w_\ast(\lambda) = n_1\epsilon_1 + n_2\epsilon_2+n_3\epsilon_3+c\kappa$, then the subspace $Ind_{\mathrm{P}_0}^{\mathrm{G}} H^0(S^{\mathrm{M}_0}, \widetilde{W}_{w_\ast(\lambda)})$ of $H^{q-1}(\partial_0, \V_\lambda)$ in ~\eqr{Decomp} has weight $-2n_1-2c$. Note that $\ell(w)=q-1$. Thus, a necesary condition for the space $Ind_{\mathrm{P}_0}^{\mathrm{G}} H^0(S^{\mathrm{M}_0}, \widetilde{W}_{w_\ast(\lambda)})$ to contribute to ghost classes is that $-2c - 2n_1 \geq q-2c  = \ell(w) + 1 - 2c$

We summarize the above discussion in the form of following lemma.

\begin{lema}\label{mwelimination} 
If $w \in \mathcal{W}^{\mathrm{P}_0}$ satisfies the inequality 
\begin{eqnarray}\label{eq:length}
 \ell(w) + 1 > -2n_1 \nonumber
\end{eqnarray}
then the space $Ind_{\mathrm{P}_0}^{\mathrm{G}} H^0(S^{\mathrm{M}_0}, \widetilde{W}_{w_\ast(\lambda)})$ cannot contribute to ghost classes in $H^{\bullet}(\partial \overline{S}, \V_\lambda)$.
\end{lema}

\subsection{Image of $r_i:H^\bullet(\partial_i, \V_\lambda) \rightarrow H^\bullet(\partial_0, \V_\lambda)$}  \label{r1}
  
To study the image of $r_i$ we use the general description of Eisenstein cohomology in ~\cite{Sch83} and ~\cite{Schwermer1994}. In order to enunciate the main theorem that will be used for the study of the images of the morphisms $r_i$, we need to introduce some notations.

From now on, $i$ will denote an element in $\left\{0, 1, 2\right\}$. As usual, we denote by $\mathrm{U}_i$ the unipotent radical of $\mathrm{P}_i$. We denote $d_i = dim (\mathrm{U}_0(\mathbb{R})/\mathrm{U}_i(\mathbb{R}))$. 

We use the following notations
\[
	\mathfrak{a}_{\mathrm{P}_i} = X_\ast(\mathrm{A}_\mathrm{P_i}) \otimes \mathbb{R}, \quad \check{\mathfrak{a}}_{\mathrm{P_i}} = X^\ast(\mathrm{P}_i) \otimes \mathbb{R}
\]
where $X_\ast(\mathrm{A}_\mathrm{P_i})$ and $X^\ast(\mathrm{P}_i)$ denote, respectively, the group of $\mathbb{Q}$-rational cocharacters of $\mathrm{A}_\mathrm{P_i}$ and the group of $\mathbb{Q}$-rational characters of $\mathrm{P_i}$. There is a natural isomorphism between $\mathfrak{a}_{\mathrm{P}_i}$ and the Lie algebra of $\mathrm{A}_\mathrm{P_i}(\mathbb{R})$, and $\check{\mathfrak{a}}_{\mathrm{P_i}}$ is naturally isomorphic to $\mathfrak{a}_{\mathrm{P}_i}^\ast$. The natural pairing between $\check{\mathfrak{a}}_{\mathrm{P}_i}$ and $\mathfrak{a}_{\mathrm{P}_i}$ will be denoted by $\langle, \rangle$. In particular, $\mathfrak{a}_{\mathrm{P}_0}$ is naturally isomorphic to $Lie(\mathrm{A}(\mathbb{R}))$. Remember, from Section ~\ref{tsv}, that $\varepsilon_1, \varepsilon_2$ denote the usual first and second coordinate functions in the diagonal matrices of $\mathrm{A}$. 

Let $\Delta^{\mathrm{P}_i}_{\mathrm{P}_0} \subset \Delta_\mathbb{Q}$ be the set of simple roots which occur in the Lie algabra of $\mathrm{U}_0$ but not in the Lie algebra of $\mathrm{U}_i$. We denote by $\check{\mathfrak{a}}^{\mathrm{P}_i}_{\mathrm{P}_0}$ the subspace of $ \check{\mathfrak{a}}_{\mathrm{P}_0}$ generated by the elements in $\Delta^{\mathrm{P}_i}_{\mathrm{P}_0}$. Let $\mathfrak{a}^{\mathrm{P}_i}_{\mathrm{P}_0}$ be the subspace of $\mathfrak{a}_{\mathrm{P_0}}$ annihilated by $\check{\mathfrak{a}}_{\mathrm{P_i}}$. Let $\mathrm{A}^{\mathrm{P}_i}_{\mathrm{P}_0} \subset \mathrm{A}_{\mathrm{P}_0}$ be the subtorus whose corresponding Lie subalgebra is $\mathfrak{a}^{\mathrm{P}_i}_{\mathrm{P}_0} \subset \mathfrak{a}_{\mathrm{P}_0}$. Let $\Delta( \mathrm{P}_0, \mathrm{A}^{\mathrm{P}_i}_{\mathrm{P}_0})$ be the system of simple roots defined by the choice of minimal parabolic $\mathrm{P}_0$ and the torus $\mathrm{A}^{\mathrm{P}_i}_{\mathrm{P}_0}$.

On the other hand, for $i = 1$ or $2$, let $\Omega^{\mathrm{P}_i}(\mathfrak{a}_{\mathrm{P_0}})$ be the set of isomorphisms of $\mathfrak{a}_{\mathrm{P}_0}$ given by the restriction to $\mathfrak{a}_{\mathrm{P}_0}$ of an element of the Weyl group $\mathcal{W}$ and leaving the space $\mathfrak{a}_{\mathrm{P_i}}$ pointwise fixed. In our case, $\Omega^{\mathrm{P}_i}(\mathfrak{a}_{\mathrm{P_0}})$ has two elements, one is the identity and the other one will be denoted by $s_i$. For the cases we will work on, the fact that $s_i\in \mathcal{W}_i^{0}$  and $\ell(w)+\ell(s_i w) = d_i$ will be enough to describe $s_i$.

Finally, for $w \in \mathcal{W}^{\mathrm{P}_0}$, we denote
\[
	\Lambda_w^{\mathrm{P}_i} = \left.-w(\lambda + \rho)\right|_{\mathfrak{a}^{\mathrm{P}_i}_{\mathrm{P}_0}}
\]
Although in Section $6$ of ~\cite{Schwermer1994} one finds this definition with $\rho_{\mathrm{P}_0}$ (as in Section $1.7$ of ~\cite{Sch83}) instead of $\rho$, one has $\left.\rho\right|_{\mathfrak{a}_{\mathrm{P}_0}}=\rho_{\mathrm{P}_0}$ (see Section $1.7$ of ~\cite{Sch83}). That is why one also finds this definition with $\rho$ instead of $\rho_{\mathrm{P}_0}$ in the introduction of ~\cite{Schwermer1994}.
With all this notation, we can now introduce the theorem that we will use, whose details for the proof can be found in ~\cite{Sch83} and ~\cite{Schwermer1994}.

\begin{thm} \label{EisensteinCohomologyThm}
Let $i$ be $1$ or $2$. Let $w \in \mathcal{W}^{\mathrm{P}_0}$ be such that, if $w = w^{\mathrm{P}_i/\mathrm{P}_0}w^{\mathrm{P}_i}$ with respect to the decomposition $\mathcal{W}^{\mathrm{P}_0} = \mathcal{W}^0_i\mathcal{W}^{\mathrm{P}_i}$, then $\ell(w^{\mathrm{P}_i/\mathrm{P}_0}) \geq \frac{d_i}{2}$. Let $[\varphi]$ be a cohomology class in $Ind_{\mathrm{P}_0}^{\mathrm{G}} H^0(S^{\mathrm{M}_0}, \widetilde{W}_{w_\ast(\lambda)})$ represented by a cuspidal form $\phi$. Let $E(\varphi, \Lambda)$ be the Eisenstein series  in the complex variable $\Lambda$, defined formally in Section $6$ of ~\cite{Schwermer1994}. Then:
\begin{enumerate}
\item[(a)] If $\langle \Lambda_w^{\mathrm{P}_i}, \alpha^\vee\rangle  > \langle \left.\rho\right|_{\mathfrak{a}^{\mathrm{P}_i}_{\mathrm{P}_0}}, \alpha^\vee\rangle$ for all $\alpha \in \Delta(\mathrm{P}_0, \mathrm{A}_{\mathrm{P}_0}^{\mathrm{P}_i})$ (i.e. if $\Lambda_w^{\mathrm{P}_i} - \left.\rho\right|_{\mathfrak{a}^{\mathrm{P}_i}_{\mathrm{P}_0}}$ is in the positive Weyl chamber of the system of simple roots $\Delta(\mathrm{P}_0, \mathrm{A}_{\mathrm{P}_0}^{\mathrm{P}_i})$) then the Eisenstein series $E(\varphi, \Lambda)$ is holomorphic at $\Lambda = \Lambda_w^{\mathrm{P}_i}$.
\item[(b)] If $\langle \Lambda_w^{\mathrm{P}_i}, \alpha^\vee\rangle  > 0$ for all $\alpha \in \Delta(\mathrm{P}_0, \mathrm{A}_{\mathrm{P}_0}^{\mathrm{P}_i})$ (i.e. if $\Lambda_w^{\mathrm{P}_i}$ is in the positive Weyl chamber of the system of simple roots $\Delta(\mathrm{P}_0, \mathrm{A}_{\mathrm{P}_0}^{\mathrm{P}_i})$) and the highest weight $w^{\mathrm{P}_i}_\ast(\lambda)$ of $\mathrm{M}_i$ is regular, then the Eisenstein series $E(\varphi, \Lambda)$ is holomorphic at $\Lambda = \Lambda_w^{\mathrm{P}_i}$.
\end{enumerate}  
In both cases, $E(\varphi, \Lambda_w^{\mathrm{P}_i})$ defines a closed form representing a cohomology class $[E(\varphi, \Lambda_w^{\mathrm{P}_i})]$ in $Ind_{\mathrm{P}_i}^{\mathrm{G}} H^{\ell(w^{\mathrm{P}_i/\mathrm{P}_0})}(S^{\mathrm{M}_i}, \widetilde{W}_{(w^{\mathrm{P}_i})_\ast(\lambda)}) \subset H^{\ell(w)}(\partial_i, \tilde{V}_\lambda)$ and one has:
\begin{enumerate}
\item If $\ell(w^{\mathrm{P}_i/\mathrm{P}_0}) > \frac{d_i}{2}$ then $r_i([E(\varphi, \Lambda_w^{\mathrm{P}_i})]) = [\varphi]$.
\item If $\ell(w^{\mathrm{P}_i/\mathrm{P}_0}) = \frac{d_i}{2}$ then, let $w'$ be $(s_iw^{\mathrm{P}_i/\mathrm{P}_0})w^{\mathrm{P}_i}$. One has
\[
r_i([E(\varphi, \Lambda_w^{\mathrm{P}_i})]) = [\varphi] + c(\Lambda_w^{\mathrm{P}_i})[\varphi] \in Ind_{\mathrm{P}_0}^{\mathrm{G}} H^0(S^{\mathrm{M}_0}, \widetilde{W}_{w_\ast(\lambda)}) \oplus Ind_{\mathrm{P}_0}^{\mathrm{G}} H^0(S^{\mathrm{M}_0}, \widetilde{W}_{w'_\ast(\lambda)}) \subset H^{\ell(w)}(\partial_0, \tilde{V}_\lambda).
\]
where $c(\Lambda_w^{\mathrm{P}_i}):Ind_{\mathrm{P}_0}^{\mathrm{G}} H^0(S^{\mathrm{M}_0}, \widetilde{W}_{w_\ast(\lambda)}) \rightarrow Ind_{\mathrm{P}_0}^{\mathrm{G}} H^0(S^{\mathrm{M}_0}, \widetilde{W}_{w'_\ast(\lambda)}) \subset H^{\ell(w)}(\partial_0, \tilde{V}_\lambda)$ is certain intertwining operator (that will not be used in this paper).
\end{enumerate} 
\end{thm}
  
\begin{proof}
For the case $(a)$, when $\Lambda_w^{\mathrm{P}_i} - \left.\rho\right|_{\mathfrak{a}^{\mathrm{P}_i}_{\mathrm{P}_0}}$ is in the positive Weyl chamber of the system of simple roots $\Delta(\mathrm{P}_0, \mathrm{A}_{\mathrm{P}_0}^{\mathrm{P}_i})$, this theorem is a combination of results of Section $6$ in ~\cite{Schwermer1994}, in particular Theorem $6.3$, Theorem $6.4$ and the proposition of that section. We observe that the result enunciated is this theorem is true even for nonregular highest weight $\lambda$, because the fact that $\Lambda_w^{\mathrm{P}_i} - \left.\rho\right|_{\mathfrak{a}^{\mathrm{P}_i}_{\mathrm{P}_0}}$ is in the positive Weyl chamber of the system of simple roots $\Delta(\mathrm{P}_0, \mathrm{A}_{\mathrm{P}_0}^{\mathrm{P}_i})$ already implies that the Eisenstein series is holomorphic at $\Lambda_w^{\mathrm{P}_i}$ and represents a closed form in $H^{\ell(w)}(\partial_i, \tilde{V}_\lambda)$. Then we can use the same reasoning as in the proof of Theorem $6.4$ of ~\cite{Schwermer1994} and Theorem $4.11$ of ~\cite{Sch83} to get the description of $r_i([E(\varphi, \Lambda_w^{\mathrm{P}_i})])$. 

For the item $(1)$, in principle one has 
\[
r_i([E(\varphi, \Lambda_w^{\mathrm{P}_i})]) = [\varphi] + c(\Lambda_w^{\mathrm{P}_i})[\varphi] \in Ind_{\mathrm{P}_0}^{\mathrm{G}} H^0(S^{\mathrm{M}_0}, \widetilde{W}_{w_\ast(\lambda)}) \oplus Ind_{\mathrm{P}_0}^{\mathrm{G}} H^0(S^{\mathrm{M}_0}, \widetilde{W}_{(s_iw)_\ast(\lambda)}) 
\]
On the other hand $Ind_{\mathrm{P}_0}^{\mathrm{G}} H^0(S^{\mathrm{M}_0}, \widetilde{W}_{w_\ast(\lambda)}) \subset H^{\ell(w)}(\partial_0, \tilde{V}_\lambda)$ and therefore $[E(\varphi, \Lambda_w^{\mathrm{P}_i})] \in H^{\ell(w)}(\partial_i, \tilde{V}_\lambda)$.

But $\ell(s_iw^{\mathrm{P}_i/\mathrm{P}_0}) < \ell(w^{\mathrm{P}_i/\mathrm{P}_0})$ and therefore $\ell(s_iw) < \ell(w)$. Therefore $H^0(S^{\mathrm{M}_0}, \widetilde{W}_{(s_iw)_\ast(\lambda)})$ defines cohomology classes in degree $\ell(s_iw)$. Hence $c(\Lambda_w^{\mathrm{P}_i})[\varphi] = 0$.

For the case $(b)$, when $\Lambda_w^{\mathrm{P}_i}$ is in the positive Weyl chamber and the highest weigth $w^{\mathrm{P}_i}_\ast(\lambda)$ of $\mathrm{M}_i$ is regular, we still need to prove that the Eisenstein series $E(\varphi, \Lambda)$ is holomorphic at $\Lambda = \Lambda_w^{\mathrm{P}_i}$. For this we observe the following fact. One knows that, for $i \in \left\{0, 1, 2\right\}$ one has a decomposition 
\begin{equation}
H^q(\partial_i, \widetilde{V}_\lambda) = \bigoplus_{w \in \mathcal{W}^{\mathrm{P}_i}} Ind_{\mathrm{P}_i}^{\mathrm{G}} H^{q-\ell(w)}(S^{\mathrm{M}_i}, \widetilde{W}_{w_\ast(\lambda)}) \,. \nonumber
\end{equation}
If $i$ is $1$ or $2$, then for $w \in \mathcal{W}^{\mathrm{P}_i}$, such that $w = w^{\mathrm{P}_i/\mathrm{P}_0}w^{\mathrm{P}_i}$ with respect to the decomposition $\mathcal{W}^{\mathrm{P}_0} = \mathcal{W}^0_i\mathcal{W}^{\mathrm{P}_i}$, the restriction of $r_i$ to the summand
\begin{equation} \label{Levi}
Ind_{\mathrm{P}_i}^{\mathrm{G}} H^{q-\ell(w)}(S^{\mathrm{M}_i}, \widetilde{W}_{(w^{\mathrm{P}_i})_\ast(\lambda)})
\end{equation}
has image in
\begin{equation} \label{boundaryMi}
\bigoplus_{\tilde{w} \in \mathcal{W}_i^0} Ind_{\mathrm{P}_0}^{\mathrm{G}} H^{q-\ell(w)-\ell(\tilde{w})}(S^{\mathrm{M}_0}, \widetilde{W}_{(\tilde{w}w^{\mathrm{P}_i})_\ast(\lambda)})\, 
\end{equation}

and (\ref{boundaryMi}) can be thought of as the boundary of the Borel-Serre compactification of (\ref{Levi}). One could therefore think about the construction of Eisenstein cohomology classes in 
\[
Ind_{\mathrm{P}_i}^{\mathrm{G}} H^{q-\ell(w)}(S^{\mathrm{M}_i}, \widetilde{W}_{(w^{\mathrm{P}_i})_\ast(\lambda)})
\]
from cohomology classes in the space $Ind_{\mathrm{P}_0}^{\mathrm{G}} H^{q-\ell(w)}(S^{\mathrm{M}_0}, \widetilde{W}_{w_\ast(\lambda)})$ as in ~\cite{Sch83}. 
We remark here that the parabolic induction $Ind_{\mathrm{P}_1}^{\mathrm{G}}$ appears after taking the inverse limit over the level varieties. So, we can work on the level varieties and then, when taking the inverse limit,  obtain the same results for $Ind_{\mathrm{P}_i}^{\mathrm{G}} H^{q-\ell(w)}(S^{\mathrm{M}_i}, \widetilde{W}_{(w^{\mathrm{P}_i})_\ast(\lambda)})$ or we can use the exactness of the parabolic induction.

In that case we would be thinking about the reductive group $\mathrm{M_i}$ and the $\mathbb{Q}$-system of positive roots $\Phi^+_{\mathrm{M_i}} = \Phi^+(\mathrm{M_i}, \mathrm{A}_\mathrm{P_i})$ defined by the (minimal) $\mathbb{Q}$-parabolic subgroup $\mathrm{P}_0 \cap \mathrm{M_i}$. In this setting, one has the corresponding element $\rho_{\mathrm{M_i}} = \sum_{\alpha \in \Phi^+_{\mathrm{M_i}}} \alpha$. One can see that $\left.\rho_{\mathrm{M_i}}\right|_{\mathfrak{a}^{\mathrm{P}_i}_{\mathrm{P}_0}} = \left.\rho\right|_{\mathfrak{a}^{\mathrm{P}_i}_{\mathrm{P}_0}}$. 
In fact, remember that the evaluation point, as in Theorem $4.11$ of ~\cite{Sch83}, in this case is given by
\[
\left.-w^{\mathrm{P}_i/\mathrm{P}_0}((w^{\mathrm{P}_i})_\ast(\lambda) + \rho_{\mathrm{M_i}}) \right|_{\mathfrak{a}^{\mathrm{P}_i}_{\mathrm{P}_0}}  =  \left.-w^{\mathrm{P}_i/\mathrm{P}_0}((w^{\mathrm{P}_i}(\lambda+\rho) - \rho) + \rho_{\mathrm{M_i}}) \right|_{\mathfrak{a}^{\mathrm{P}_i}_{\mathrm{P}_0}} = \left.-w(\lambda + \rho)\right|_{\mathfrak{a}^{\mathrm{P}_i}_{\mathrm{P}_0}}
\] 
Then, as it is already explained in the proof of Theorem $6.3$ of ~\cite{Schwermer1994}, one has that if the highest weight $(w^{\mathrm{P}_i})_\ast(\lambda)$ for $\mathrm{M}_i$ is regular, the Eisenstein series $E(\varphi, \Lambda)$ does not have a pole at $\Lambda_w^{\mathrm{P}_i}$, otherwise the residue of that Eisenstein series would represent a square integrable cohomology class in $Ind_{\mathrm{P}_i}^{\mathrm{G}} H^{q-\ell(w)}(S^{\mathrm{M}_i}, \widetilde{W}_{(w^{\mathrm{P}_i})_\ast(\lambda)})$ (see the comment before Proposition in Section $6$ of \cite{Schwermer1994}). But in the regular case, the square integrable cohomology is equal to the cuspidal cohomology (Corollary $2.3$ in  ~\cite{Schwermer1994}). This would be a contradiction, since the Eisenstein series could not represent cuspidal cohomology classes.
\end{proof}

Let $l$ be the rank of $G_n$, as defined in Subsections ~\ref{rsystemodd} and ~\ref{rsystemeven}. In the case treated in this paper one has that $\rho$ is given by
\[
	\rho = \left\{ \begin{array}{ccc}
 \sum_{k=1}^\ell (l-k+\frac{1}{2}) \epsilon_k, & \mbox{if $n$ is odd} \\
 \sum_{k=1}^\ell (l-k) \epsilon_k, & \mbox{if $n$ is even} \end{array} \right..	
\]

\subsubsection{The case $i=1$}

In this case, by using the results in Section \ref{wr} one has $d_i = \left|\Delta_i\right| - \left|\Delta_0\right|$, then
\[ 
d_1 = \left|\Delta_1\right| - \left|\Delta_0\right| = \left\{ \begin{array}{ccc}
 1 + 2(l-2), & \mbox{if $n$ is odd} \\
 2(l-2), & \mbox{if $n$ is even} \end{array} \right..
 \]
where $\left|\Delta_i\right|$ denotes the cardinality of the set $\Delta_i$. 

$\Delta^{\mathrm{P}_1}_{\mathrm{P}_0} \subset \Delta_\mathbb{Q} = \left\{ \varepsilon_1 - \varepsilon_2 \right\}$ and $\check{\mathfrak{a}}^{\mathrm{P}_1}_{\mathrm{P}_0}$ is the $\mathbb{R}$-vector space generated by $\varepsilon_1 - \varepsilon_2$. On the other hand, $\mathfrak{a}^{\mathrm{P}_1}_{\mathrm{P}_0}$ is generated by the character
\[
r \mapsto \left[ \begin{array}{ccccc}
 1 & 0 & 0 & 0 & 0\\
 0 & r & 0 & 0 & 0\\
 0 & 0 & Id_{n-2} & 0 & 0 \\ 
 0 & 0 & 0 & r^{-1} & 0 \\
 0 & 0 & 0 & 0 & 1 \end{array} \right]
\] 
or equivalently, under the natural isomorphism, it is the real vector subspace of $Lie(\mathrm{A}_{\mathrm{P}_0})$ generated by $E_{2,2}-E_{n+1,n+1}$ (where $E_{i, j}$ denotes the $(n+2) \times (n+2)$ matrix with $(i, j)$ entry $1$ and all other entries equal to $0$). 

\subsubsection{The case $i=2$}  
  
In this case, one has $d_2 = \left|\Delta_2\right| - \left|\Delta_0\right| = 1$.
  
$\Delta^{\mathrm{P}_2}_{\mathrm{P}_0} \subset \Delta_\mathbb{Q} = \left\{ \varepsilon_2 \right\}$ and $\check{\mathfrak{a}}^{\mathrm{P}_2}_{\mathrm{P}_0}$ is the $\mathbb{R}$-vector space generated by $\varepsilon_2$. On the other hand, $\mathfrak{a}^{\mathrm{P}_2}_{\mathrm{P}_0}$ is generated by the character
\[
r \mapsto \left[ \begin{array}{ccccc}
 r & 0 & 0 & 0 & 0\\
 0 & r^{-1} & 0 & 0 & 0\\
 0 & 0 & Id_{n-2} & 0 & 0 \\ 
 0 & 0 & 0 & r & 0 \\
 0 & 0 & 0 & 0 & r^{-1} \end{array} \right] 
\] 
or equivalently, under the natural isomorphism, it is the real vector subspace of $Lie(\mathrm{A}_{\mathrm{P}_0})$ generated by $E_{1,1}-E_{2,2}+E_{n+1,n+1}-E_{n+2,n+2}$.   

\begin{thm} \label{GL2}
If $w = w^{\mathrm{P}_2/\mathrm{P}_0}w^{\mathrm{P}_2} \in \mathcal{W}^{\mathrm{P}_0} = \mathcal{W}^0_2\mathcal{W}^{\mathrm{P}_2}$ and $(w^{\mathrm{P}_2})_\ast(\lambda) = n_1\epsilon_1 + \ldots + n_l\epsilon_l +c\kappa$. Then, if $w^{\mathrm{P}_2/\mathrm{P}_0} = w_{(1, 2), \mathfrak{1}}$ (notation as in ~\ref{weylodd} and ~\ref{weyleven}) and $n_1 > n_2$ then the space $Ind_{\mathrm{P}_0}^{\mathrm{G}} H^{0}(S^{\mathrm{M}_0}, \widetilde{W}_{w_\ast(\lambda)})$ is contained in the image of $r_2$ and does not contribute to ghost classes.
\end{thm}

\begin{proof}
We know that $d_2=1$, therefore, under the hypothesis of the theorem $\ell(w^{\mathrm{P}_2/\mathrm{P}_0}) > d_2$. On the other hand,  $w_\ast(\lambda) = w^{\mathrm{P}_2/\mathrm{P}_0}_\ast((w^{\mathrm{P}_i})_\ast(\lambda)) = w_{(1, 2), \mathfrak{1}}((w^{\mathrm{P}_i})_\ast(\lambda) + \rho) - \rho$.
Therefore if $w_\ast(\lambda) = m_1\epsilon_1 + \ldots + m_l\epsilon_l +c\kappa$ one has $m_1=n_2-1$ and $m_2=n_1+1$. Moreover, $\Lambda_w^{\mathrm{P}_2} = \left.-w(\lambda+\rho)\right|_{\mathfrak{a}^{\mathrm{P}_i}_{\mathrm{P}_0}} = \left.-(w_\ast(\lambda)+\rho)\right|_{\mathfrak{a}^{\mathrm{P}_i}_{\mathrm{P}_0}}$. Then the inequality in item $(a)$ of Theorem \ref{EisensteinCohomologyThm} is given by $-(m_1-m_2) > 2$, but this means $2 + (n_1-n_2) > 2$. Therefore, if $n_1 > n_2$ then the hypothesis of items $(a)$ and $(1)$ of Theorem \ref{EisensteinCohomologyThm} are satisfied and the result is proved. This theorem can also be proved by using Theorem $2$ in \cite{Harder87} together with the exactness of the parabolic induction.
\end{proof}

\section{Ghost Classes For $\mathrm{GO}(2, 4)$}\label{GO24}
In this section, we closely study each element $w \in \mathcal{W}^{\mathrm{P}_0}$ to determine when the associated space $Ind_{\mathrm{P}_0}^{\mathrm{G}} H^{0}(S^{\mathrm{M}_0},  \widetilde{W}_{w_\ast(\lambda)})$ will have possible contribution to ghost classes. This is done by using the discussion carried out in Section~\ref{wr}, Section~\ref{mht} and the facts listed in Section~\ref{facts}. In this case the set of Weyl representatives $\mathcal{W}^{\mathrm{P}_0}$ is the whole Weyl group $\mathcal{W}$.  In this particular case, the description of the sets of Weyl representatives given in the Subsection~\ref{weyleven}  can be summarized as follows:
\begin{itemize}
\item $\mathcal{W}^{\mathrm{P}_0} = \mathcal{W}$, this is the set of all $24$ elements listed in Table~\ref{table2} below.
\item  $\mathcal{W}^{\mathrm{P}_2} =  \left\{ w_{1},  w_{4},  w_{6},  w_{8}, w_{9},  w_{11},  w_{13},  w_{14}, w_{15}, w_{16}, w_{17}, w_{18} \right\} .$
\item $\mathcal{W}^{\mathrm{P}_1} = \left\{ w_{1},  w_{2},  w_{5},  w_{19}, w_{20},  w_{23}\right\}$.
\item $\mathcal{W}_1^{0} = \left\{  w_{1}, w_{4},  w_{13}, w_{16}\right\}$.
\item $\mathcal{W}_2^{0} = \left\{ w_{1}, w_{2}\right\}$.
\end{itemize} 
We present a table with the elements in $\mathcal{W}^{\mathrm{P}_0}$ and where each column delivers specific information described below.
\begin{table}[ht] \label{weights4}
\caption{The set of Weyl representatives $\mathcal{W}^{\mathrm{P}_0}$ for $GO(2,4)$}
\label{table2}
\centering
\begin{tabular}{lllccllrrr}
\hline \noalign{\smallskip}
$w$ & $\sigma$ & $f$ & $\ell(w)$ & weight + $2c$ & $\mathcal{W}_2^0 \mathcal{W}^{\mathrm{P}_2}$ & $\mathcal{W}_1^0 \mathcal{W}^{\mathrm{P}_1}$ & $n_1$ & $n_2$ & $n_3$ \\
\noalign{\smallskip}\hline\noalign{\smallskip}
$w_1$ & $e$ & $\emptyset$ & $0$ &  $-2a_1$  & $w_1 w_1$ & $w_1 w_1$ & $a_1$ & $a_2$ & $a_3$\\ 
$w_{2}$ & $(1\, 2)$ & $\emptyset$ & $1$ & $2-2a_2$ & $w_2 w_1$ & $w_1 w_2$ & $a_2-1$ & $a_1+1$  & $a_3$\\ 
$w_{3}$ & $(1\, 3)$ & $\emptyset$ & $3$ & $4-2a_3$ & $w_2 w_6$  & $w_4 w_5$ & $a_3-2$ & $a_2$ & $a_1+2$\\ 
$w_{4}$ & $(2\, 3)$ & $\emptyset$ & $1$ &  $-2a_1$  & $w_1 w_4$  & $w_4 w_1$ & $a_1$ & $a_3-1$ & $a_2 +1$ \\ 
$w_{5}$ & $(1\, 2\, 3)$ & $\emptyset$ & $2$ & $4-2a_3$ & $w_2 w_4$ & $w_1 w_5$ & $a_3 - 2$ & $a_1 + 1$ & $a_2 +1$ \\ 
$w_{6}$ & $(3\, 2\, 1)$ & $\emptyset$ & $2$ & $2-2a_2$ & $w_1 w_6$ & $w_4 w_2$ & $a_2 - 1$ & $a_3 - 1$ & $a_1+2$ \\ 
$w_{7}$ & $e$ & $\left\{1, 2 \right\}$ & $6$ & $8+2a_1$ & $w_2 w_{8}$ & $w_{13} w_{19}$ & $-a_1-4$ & $-a_2 - 2$ & $a_3$ \\
$w_{8}$ & $(1\, 2)$ & $\left\{1, 2 \right\}$ & $5$ & $6+2a_2$ & $w_1 w_{8}$ & $w_{13} w_{20}$ & $-a_2-3$ & $-a_1-3$ & $a_3$ \\ 
$w_{9}$ & $(1\, 3)$ & $\left\{1, 2 \right\}$ & $3$ & $4+2a_3$ & $w_1 w_{9}$ & $w_{4} w_{23}$  & $-a_3-2$ & $-a_2-2$ & $a_1+2$\\ 
$w_{10}$ & $(2\, 3)$ & $\left\{1, 2 \right\}$ & $5$ & $8+2a_1$ & $w_2 w_{11}$ & $w_{4} w_{19}$& $-a_1-4$ & $-a_3-1$ & $a_2+1$\\ 
$w_{11}$ & $(1\, 2\, 3)$ & $\left\{1, 2 \right\}$ & $4$ & $4+2a_3$ & $w_1 w_{11}$ &$w_{13} w_{23}$ & $-a_3-2$ & $-a_1 - 3$ &$a_2+1$ \\ 
$w_{12}$ & $(3\, 2\, 1)$ &$\left\{1, 2 \right\}$ & $4$ & $6+2a_2$ & $w_2 w_{9}$ & $w_{4} w_{20}$& $- a_2- 3$ & $-a_3 - 1$ & $a_1+2$\\  
$w_{13}$ & $e$ & $\left\{2, 3 \right\}$ & $2$ & $-2a_1$ & $w_1 w_{13}$ & $w_{13} w_{1}$ & $a_1$ & $-a_2-2$ & $-a_3$\\ 
$w_{14}$ & $(1\, 2)$ & $\left\{2, 3 \right\}$ & $3$ & $2-2a_2$ & $w_1 w_{14}$ & $w_{13} w_{2}$ & $a_2-1$ & $-a_1-3$ &$-a_3$ \\ 
$w_{15}$ & $(1\, 3)$ & $\left\{2, 3 \right\}$ & $3$ & $4-2a_3$ & $w_1 w_{15}$ & $w_{16} w_{5}$& $a_3-2$ & $-a_2-2$ & $- a_1 -2$\\
$w_{16}$ & $(2\, 3)$ & $\left\{2, 3 \right\}$ & $1$ & $-2a_1$ & $w_1 w_{16}$ & $w_{16} w_{1}$& $a_1$ & $-a_3-1$ & $-a_2-1$\\  
$w_{17}$ & $(1\, 2\, 3)$ & $\left\{2, 3 \right\}$ & $4$ & $4-2a_3$ & $w_1 w_{17}$ & $w_{13} w_{5}$ & $a_3 - 2$ & $- a_1 -3$ &$-a_2-1$ \\ 
$w_{18}$ & $(3\, 2\, 1)$ & $\left\{2, 3 \right\}$ & $2$ & $2-2a_2$ & $w_1 w_{18}$ & $w_{16} w_{2}$& $a_2 - 1$ & $-a_3 - 1$ & $- a_1 -2$\\ 
$w_{19}$ & $e$ & $\left\{1, 3 \right\}$ & $4$ & $8+2a_1$ & $w_2 w_{14}$  & $w_{1} w_{19}$ & $-a_1-4$ & $a_2$ & $-a_3$\\ 
$w_{20}$ & $(1\, 2)$ & $\left\{1, 3 \right\}$ & $3$ & $6+2a_2$ & $w_2 w_{13}$ & $w_{1} w_{20}$& $-a_2-3$ & $a_1 +1$ & $-a_3$\\
$w_{21}$ & $(1\, 3)$ & $\left\{1, 3 \right\}$ & $3$ & $4+2a_3$ & $w_2 w_{18}$ &$w_{16} w_{23}$ & $-a_3-2$ & $a_2$ & $- a_1 -2$\\ 
$w_{22}$ & $(2\, 3)$ & $\left\{1, 3 \right\}$ & $5$ & $8+2a_1$ & $w_2 w_{17}$ & $w_{16} w_{19}$& $-a_1-4$ & $a_3-1$ & $-a_2-1$\\ 
$w_{23}$ & $(1\, 2\, 3)$ & $\left\{1, 3 \right\}$ & $2$ & $4+2a_3$ & $w_2 w_{16}$ &$w_{1} w_{23}$ & $-a_3 - 2$ & $a_1+1$ & $-a_2-1$\\ 
$w_{24}$&$(3\, 2\, 1)$ & $\left\{1, 3 \right\}$ & $4$ & $6+2a_2$ & $w_2 w_{15}$ & $w_{16} w_{20}$ & $- a_2 - 3$ & $a_3 - 1$ & $- a_1 -2$\\ 
\noalign{\smallskip}\hline
\end{tabular}
\end{table}

In the first column of Table~\ref{table2}, we indicate the Weyl representatives determined by the permutation $\sigma \in S_3$ and the choice of signs $f$ given in the second and third column respectively. In the third column we describe $f$  by giving the set $f^{-1}(-1) \subset \left\{1, 2, 3 \right\}$. The fourth column collects the length of the corresponding Weyl representative and the fifth column indicates the weights in the mixed Hodge structure of $Ind_{\mathrm{P}_0}^{\mathrm{G}} H^0(S^{\mathrm{M}_0}, \widetilde{W}_{w_\ast(\lambda)})$ plus $2c$ (this is just $-2n_1$, by Subsection ~\ref{weight0}). The sixth and seventh column indicates the components of $w$ with respect to the decomposition $\mathcal{W}_2^0 \mathcal{W}^{\mathrm{P}_2}$  and $\mathcal{W}_1^0 \mathcal{W}^{\mathrm{P}_1}$ of $\mathcal{W}^{\mathrm{P}_0}$. In the last three columns we write the coefficients $n_1, n_2, n_3$ from the expression $w_\ast(\lambda) = n_1 \epsilon_1 + n_2 \epsilon_2 + n_3 \epsilon_3 + c \kappa$. We now prove the following 

\begin{thm} \label{Thm2}
Let $V_\lambda$ be the finite dimensional irreducible representation of $\mathrm{GO}(2, 4)$ with highest weight $\lambda = a_1 \epsilon_1 + a_2 \epsilon_2 + a_3 \epsilon_3 + c \kappa$. One has:
\begin{enumerate}
\item If $a_2 \neq 0$, then there are no ghost classes in the cohomology space $H^\bullet(\partial \overline{S}, \V_\lambda)$.

\item If $a_2 = 0$ (which implies $a_3=0$ and therefore, in terms of fundamental weights, $\lambda = a_1 \varpi_1 + c \kappa$), then the only possible weights in the space of ghost classes are the middle weight and the middle weight plus one.
\end{enumerate}
\end{thm}

\begin{proof}
We begin by using the facts from Subsection~\ref{MW} to eliminate certain possible contributions of the spaces $Ind_{\mathrm{P}_0}^{\mathrm{G}} H^0(S^{\mathrm{M}_0}, \widetilde{W}_{w_\ast(\lambda)})$ to ghost classes for $w\in \mathcal{W}^{\mathrm{P}_0}$. Following Lemma~\ref{mwelimination},  one can see by comparing the entries of fourth and fifth columns of Table~\ref{table2} that for the Weyl representatives
\[
w \in \left\{ w_1, w_4, w_6, w_{13}, w_{14},  w_{16}, w_{18}\right\}
\]

the space $Ind_{\mathrm{P}_0}^{\mathrm{G}} H^0(S^{\mathrm{M}_0}, \widetilde{W}_{w_\ast(\lambda)})$ has weight less than the middle weight of $H^{\ell(w)+1}(S, \V_\lambda)$ and therefore this space cannot contribute to ghost classes. In addition, we note the following
\begin{enumerate}
\item[(i)] For $w=w_2$, the space $Ind_{\mathrm{P}_0}^{\mathrm{G}} H^0(S^{\mathrm{M}_0}, \widetilde{W}_{w_\ast(\lambda)})$ could only contribute to ghost classes if $a_2=0$.
\item[(ii)] For $w \in \{w_3, w_5, w_{15}\}$, the space $Ind_{\mathrm{P}_0}^{\mathrm{G}} H^0(S^{\mathrm{M}_0}, \widetilde{W}_{w_\ast(\lambda)})$ could only contribute to ghost classes if $a_3 \leq 0$.
\item[(iii)] For $w \in \{ w_9, w_{21}, w_{23}\}$, the space $Ind_{\mathrm{P}_0}^{\mathrm{G}} H^0(S^{\mathrm{M}_0}, \widetilde{W}_{w_\ast(\lambda)})$ could only contribute to ghost classes if $a_3 \geq 0$.
\end{enumerate} 

Following Theorem ~\ref{GL2}, we study the image of the morphism
\[
r_2:H^\bullet(\partial_2, \widetilde{V}_\lambda) \rightarrow H^\bullet(\partial_0, \widetilde{V}_\lambda)\,.
\]
For $w \in \mathcal{W}^{\mathrm{P}_0} $ we write by $w = w^{\mathrm{P}_2/\mathrm{P}_0}w^{\mathrm{P}_2} \in \mathcal{W}_2^0 \mathcal{W}^{\mathrm{P}_2}$ its decomposition described in the sixth column of Table~\ref{table2}. We see that for
\[
w \in \left\{ w_5, w_{10}, w_{19}, w_{20}, w_{22}, w_{23}\right\}\,.
\]
the component in $\mathcal{W}_2^0$ is $w_2$ and the component in $\mathcal{W}^{\mathrm{P}_2}$ is, respectively, $w_4, w_{11}, w_{14}, w_{13}, w_{17}$ and $w_{16}$. For each of these $w^{\mathrm{P}_2}$, we can see, following the values of $n_1$ and $n_2$ in the expression $(w^{\mathrm{P}_2})_\ast(\lambda) = n_1\epsilon_1 + \ldots + n_l\epsilon_l +c\kappa$ encoded in the last two columns of the Table~\ref{table2}, that $n_1 > n_2$. By Theorem ~\ref{GL2}, this implies that the associated space $Ind_{\mathrm{P}_0}^{\mathrm{G}} H^0(S^{\mathrm{M}_0}, \widetilde{W}_{w_\ast(\lambda)})$ will be entirely contained in the image of $r_2$ and therefore this space cannot contribute to ghost classes. 

Using the same argument for the cases $w_2$ and $w_7$, the corresponding space can contribute to ghost classes only when $a_1=a_2$. For $w_3$ and $w_{12}$, the corresponding space could contribute to ghost classes only when $a_2=a_3$ and the cases $w_{21}$ and $w_{24}$ could contribute to ghost classes only when $a_2=-a_3$.

Note that the case $w=w_7$ could only contribute to ghost classes in degree $7$. As the dimension of the symmetric space associated to $\mathrm{G}$ is $8$, then by Corollary $11.4.3$ in \cite{BoSe73} one can rule out the possibility of contribution to ghost classes.

Now, we continue analyzing further the possible contribution of the space $Ind_{\mathrm{P}_0}^{\mathrm{G}} H^0(S^{\mathrm{M}_0}, \widetilde{W}_{w_\ast(\lambda)})$ for the remaining Weyl representatives, \ie for  
\begin{equation}\label{eq:rw24}
w\in \{ w_2, w_3, w_8, w_9, w_{11}, w_{12}, w_{15}, w_{17}, w_{21}, w_{24} \} \,,
\end{equation}
by studying the image of the restriction of the map $r_1:H^\bullet(\partial_{\mathrm{P}_1}, \V_\lambda) \rightarrow H^\bullet(\partial_{\mathrm{P}_0}, \V_\lambda)$ following the discussion of Subsection ~\ref{r1}.

One has $s_1=w_{13}$, $w_\ast(\lambda) = n_1 \epsilon_1 + n_2 \epsilon_2 +n_3 \epsilon_3 + c \kappa$, $\rho=2\epsilon_1 + \epsilon_2$, $\mathrm{A}_{\mathrm{P}_0}^{\mathrm{P}_1} =\{ \epsilon_2\}$, $d_1=2$ and $-w(\lambda+\rho)=-(w_\ast(\lambda)+\rho)$. Then, in this case, $\Lambda_w^{\mathrm{P}_1}= -(n_2+1)$ and the inequality in item $(a)$ of Theorem ~\ref{EisensteinCohomologyThm} is given by $-n_2 > 2$.

We see that for $w \in \{w_8, w_{11}, w_{17}\}$, its component in $\mathcal{W}^{\mathrm{P}_1/\mathrm{P_0}}$ with respect to the decompostion $\mathcal{W}^{\mathrm{P_0}}=\mathcal{W}^{\mathrm{P}_1/\mathrm{P_0}} \mathcal{W}^{\mathrm{P}_1}$ is $w_{13}$ ($\ell(w_{13}) = 2 > \frac{d_1}{2}$) and $w$ satisfies the condition $-n_2 > 2$. Thus, a direct application of item $(1)$ of Theorem ~\ref{EisensteinCohomologyThm} gives that $Ind_{\mathrm{P}_0}^{\mathrm{G}} H^0(S^{\mathrm{M}_0}, \widetilde{W}_{w_\ast(\lambda)})$ is contained in $Im(r_1)$ and therefore it does not contribute to ghost classes.

Again, in the setting of Theorem ~\ref{EisensteinCohomologyThm}, $w = w_{9} = w_{4}w_{23} \in \mathcal{W}^{\mathrm{P}_1/\mathrm{P_0}} \mathcal{W}^{\mathrm{P}_1}$, one has $\ell(w_{4}) = \frac{d_1}{2}$ and $\Lambda_w^{\mathrm{P}_1} = a_2+1$ and the inequality of item $(a)$ is given by $a_2+1 > 1$. Therefore, if $a_2 > 0$, all the hypothesis of item $(a)$ of the aforementioned theorem are satisfied. Thus, for every form $[\varphi] \in Ind_{\mathrm{P}_0}^{\mathrm{G}} H^0(S^{\mathrm{M}_0}, \widetilde{W}_{w_\ast(\lambda)})$, the projection to the first coordinate of 
\[
r_1([E(\varphi, \Lambda_w^{\mathrm{P}_i})]) = [\varphi] + c(\Lambda_w^{\mathrm{P}_i})[\varphi] \in Ind_{\mathrm{P}_0}^{\mathrm{G}} H^0(S^{\mathrm{M}_0}, \widetilde{W}_{w_\ast(\lambda)}) \oplus Ind_{\mathrm{P}_0}^{\mathrm{G}} H^0(S^{\mathrm{M}_0}, \widetilde{W}_{(w_{21})_\ast(\lambda)}) \subset H^{\ell(w)}(\partial_0, \tilde{V}_\lambda)
\]
is again $[\varphi]$. This together with the fact that, for $a_2 > 0$, $Ind_{\mathrm{P}_0}^{\mathrm{G}} H^0(S^{\mathrm{M}_0}, \widetilde{W}_{(w_{21})_\ast(\lambda)})$ is in the image of $r_2$ implies that $Ind_{\mathrm{P}_0}^{\mathrm{G}} H^0(S^{\mathrm{M}_0}, \widetilde{W}_{w_\ast(\lambda)})$ is contained in $Im(r_1) + Im(r_2)$. In conclusion, $w_9$ could contribute to ghost classes only if $a_2 = 0$. By the same procedure and using the already proved fact that $w_{15}$ can only contribute to ghost classes if $a_3 \leq 0$, one can show that $w_{15}$ can contribute to ghots classes only if $a_2=0$.

Finally, we make use of item $(b)$ in Theorem \ref{EisensteinCohomologyThm}. For $w \in \mathcal{W}^{\mathrm{P}_1}$, the highest weight  $w_\ast(\lambda) = n_1 \epsilon_1 + n_2 \epsilon_2 +n_3 \epsilon_3 + c \kappa$  is regular for $\mathrm{M}_1$ if $n_2 > \left|n_3\right| > 0$.

For $w = w_{12} = w_{4}w_{20} \in \mathcal{W}^{\mathrm{P}_1/\mathrm{P_0}} \mathcal{W}^{\mathrm{P}_1}$, we checked before that $w_{12}$ could contribute to ghost classes only if $a_3 = a_2$. Suppose $a_2 =a_3 > 0$. One has $\ell(w_{4}) = \frac{d_1}{2}$ and $\Lambda_w^{\mathrm{P}_1} = a_3+1$ and the inequality of item $(a)$ is given by $a_3+1 > 2$. On the other hand, $(w_{20})_\ast(\lambda)$ is regular. Therefore, if $a_2=a_3 > 0$, all the hypothesis of item $(b)$ of the aforementioned theorem are satisfied. Thus, for every form $[\varphi] \in Ind_{\mathrm{P}_0}^{\mathrm{G}} H^0(S^{\mathrm{M}_0}, \widetilde{W}_{w_\ast(\lambda)})$, the projection to the first coordinate of 
\[
r_1([E(\varphi, \Lambda_w^{\mathrm{P}_i})]) = [\varphi] + c(\Lambda_w^{\mathrm{P}_i})[\varphi] \in Ind_{\mathrm{P}_0}^{\mathrm{G}} H^0(S^{\mathrm{M}_0}, \widetilde{W}_{w_\ast(\lambda)}) \oplus Ind_{\mathrm{P}_0}^{\mathrm{G}} H^0(S^{\mathrm{M}_0}, \widetilde{W}_{(w_{24})_\ast(\lambda)}) \subset H^{\ell(w)}(\partial_0, \tilde{V}_\lambda).
\]
is again $[\varphi]$. This together with the fact that, for $a_3 > 0$, $Ind_{\mathrm{P}_0}^{\mathrm{G}} H^0(S^{\mathrm{M}_0}, \widetilde{W}_{(w_{24})_\ast(\lambda)})$ is in the image of $r_2$ implies that $Ind_{\mathrm{P}_0}^{\mathrm{G}} H^0(S^{\mathrm{M}_0}, \widetilde{W}_{w_\ast(\lambda)})$ is contained in $Im(r_1) + Im(r_2)$. In conclusion, $w_{12}$ could contribute to ghost classes only if $a_2 = a_3 = 0$. By the same procedure and using the already proved fact that $w_{24}$ can only contribute to ghost classes if $a_3 = -a_2$, one can show that $w_{24}$ could contribute to ghost classes only if $a_2=0$.

We now summarize the above discussion to point out the possible contribution of the spaces $Ind_{\mathrm{P}_0}^{\mathrm{G}} H^0(S^{\mathrm{M}_0}, \widetilde{W}_{{w}_\ast(\lambda)})$ to the ghost classes, as follows:

\begin{enumerate}
\item If $a_1 = a_2 = a_3 = 0$ then the space $Ind_{\mathrm{P}_0}^{\mathrm{G}} H^0(S^{\mathrm{M}_0}, \widetilde{W}_{{(w_2)}_\ast(\lambda)})$ could contribute to ghost classes in degree $2$ and would have weight equal to the middle weight of $H^2(S, \widetilde{V}_\lambda)$.

\item If $a_2 = a_3 =0$ then the space $Ind_{\mathrm{P}_0}^{\mathrm{G}} H^0(S^{\mathrm{M}_0}, \widetilde{W}_{{(w_3)}_\ast(\lambda)})$ could contribute to ghost classes in degree $4$ and would have weight equal to the middle weight of $H^4(S, \widetilde{V}_\lambda)$.

\item If $a_2 = a_3 = 0$ then the space $Ind_{\mathrm{P}_0}^{\mathrm{G}} H^0(S^{\mathrm{M}_0}, \widetilde{W}_{{(w_9)}_\ast(\lambda)})$ could contribute to ghost classes in degree $4$ and would have weight equal to the middle weight of $H^4(S, \widetilde{V}_\lambda)$.

\item If $a_2=a_3=0$ then the space $Ind_{\mathrm{P}_0}^{\mathrm{G}} H^0(S^{\mathrm{M}_0}, \widetilde{W}_{{(w_{12})}_\ast(\lambda)})$ could contribute to ghost classes in degree $5$ and would have weight equal to the middle weight of $H^5(S, \widetilde{V}_\lambda)$ plus one.

\item If $a_2=a_3=0$ then the space $Ind_{\mathrm{P}_0}^{\mathrm{G}} H^0(S^{\mathrm{M}_0}, \widetilde{W}_{{(w_{15})}_\ast(\lambda)})$ could contribute to ghost classes in degree $4$ and would have weight equal to the middle weight of $H^4(S, \widetilde{V}_\lambda)$.

\item If $a_2=a_3=0$ then the space $Ind_{\mathrm{P}_0}^{\mathrm{G}} H^0(S^{\mathrm{M}_0}, \widetilde{W}_{{(w_{21})}_\ast(\lambda)})$ could contribute to ghost classes in degree $4$ and would have weight equal to the middle weight of $H^4(S, \widetilde{V}_\lambda)$.

\item If $a_2=a_3=0$ then the space $Ind_{\mathrm{P}_0}^{\mathrm{G}} H^0(S^{\mathrm{M}_0}, \widetilde{W}_{{(w_{24})}_\ast(\lambda)})$ could contribute to ghost classes in degree $5$ and would have weight equal to the middle weight of $H^5(S, \widetilde{V}_\lambda)$ plus one.
\end{enumerate}
Hence, we have proved the theorem.

\end{proof}

We conclude the discussion with the following

\begin{coro} \label{Coro4}
Let $V_\lambda$ be the finite dimensional irreducible representation of $\mathrm{GO}(2, 4)$ with highest weight $\lambda = n_1 \varpi_1 + n_2 \varpi_2 + n_3 \varpi_3 + c \kappa$. Then ghost classes can exist only if $n_2=n_3=0$, and in that case one has
\begin{enumerate}
\item If $n_1 \neq 0$ then ghost classes can exist only in degree $4$ with middle weight and in degree $5$ with middle weight plus one.

\item If $n_1 = 0$ then ghost classes can exist only in degrees $2$ and $4$ with middle weight and in degree $5$ with middle weight plus one.
\end{enumerate}
\end{coro}

\section{Ghost Classes For $\mathrm{GO}(2, 5)$}\label{GO25}

In this last section, we will study each element $w \in \mathcal{W}^{\mathrm{P}_0}$  to determine when the associated space $Ind_{\mathrm{P}_0}^{\mathrm{G}} H^{0}(S^{\mathrm{M}_0},  \widetilde{W}_{w_\ast(\lambda)})$ will have possible contribution to ghost classes. Note that in this case the Weyl group of $\mathrm{P}_0$ is not the whole Weyl group of the underlying group.  In this case $\mathcal{W}^{\mathrm{P}_0}$ has $4l(l-1)=4\cdot 3\cdot 2 = 24$ elements. We proceed in a similar fashion as in Section~\ref{GO24} by using the results discussed in Section~\ref{wr}, Section~\ref{mht} and the facts listed in Section~\ref{facts}.

In this particular case, the description of the sets of Weyl representatives given in the Subsection~\ref{weylodd}  can be summarized as follows:
\begin{itemize}
\item $\mathcal{W}^{\mathrm{P}_0}\subset \mathcal{W}$, and all elements of $ \mathcal{W}^{\mathrm{P}_0}$  are listed in the first column of the Table~\ref{table1} below.
\item  $\mathcal{W}^{\mathrm{P}_2} =  \left\{ w_{1},  w_{4},  w_{6},  w_{13}, w_{14},  w_{15},  w_{16}, w_{17},  w_{18}, w_{20}, w_{21}, w_{23} \right\} .$

\item $\mathcal{W}^{\mathrm{P}_1} = \left\{ w_{1},  w_{2},  w_{5},  w_{7}, w_{8},  w_{11}\right\}$.

\item $\mathcal{W}_1^{0} = \left\{  w_{1}, w_{4},  w_{13}, w_{16}\right\}$.
\item $\mathcal{W}_2^{0} = \left\{ w_{1}, w_{2}\right\}$.
\end{itemize}

We present a similar table as provided in Section~\ref{GO24}, with the elements in $\mathcal{W}^{\mathrm{P}_0}$ and where each column delivers same type of information.

\begin{table}[ht] \label{weights5}
\caption{The set of Weyl representatives $\mathcal{W}^{\mathrm{P}_0}$ for $\mathrm{GO}(2,5)$}
\label{table1}
\centering
\begin{tabular}{cllccllrrr}
\hline \noalign{\smallskip}
$w$ & $\sigma$ & $f$ & $\ell(w)$ & weight + $2c$ & $\mathcal{W}_2^0 \mathcal{W}^{\mathrm{P}_2}$ &  $\mathcal{W}_1^0 \mathcal{W}^{\mathrm{P}_1}$ & $n_1$ & $n_2$ & $n_3$ \\
\noalign{\smallskip}\hline\noalign{\smallskip}
$w_1$ & $e$ & $\emptyset$ & $0$ &  $-2a_1$  & $w_1 w_1$ & $w_1 w_1$ & $a_1$ & $a_2$ & $a_3$ \\ 
$w_{2}$ & $(1\, 2)$ & $\emptyset$ & $1$ & $2-2a_2$ & $w_2 w_1$ & $w_1 w_2$ &$a_2-1$ & $a_1+1$ & $a_3$\\ 
$w_{3}$ & $(1\, 3)$ & $\emptyset$ & $3$ & $4-2a_3$ & $w_2 w_6$  & $w_4 w_5$ & $a_3-2$ & $a_2$ & $a_1+2$ \\ 
$w_{4}$ & $(2\, 3)$ & $\emptyset$ & $1$ &  $-2a_1$  & $w_1 w_4$  & $w_4 w_1$ & $a_1$ & $a_3-1$ & $a_2+1$\\ 
$w_{5}$ & $(1\, 2\, 3)$ & $\emptyset$ & $2$ & $4-2a_3$ & $w_2 w_4$ & $w_1 w_5$ & $a_3 - 2$ & $a_1 + 1$ & $a_2+1$ \\ 
$w_{6}$ & $(3\, 2\, 1)$ & $\emptyset$ & $2$ & $2-2a_2$ & $w_1 w_6$ & $w_4 w_2$ & $a_2 - 1$ & $a_3 - 1$ & $a_2+1$ \\ 
$w_{7}$ & $e$ & $\left\{1 \right\}$ & $5$ & $10+2a_1$ & $w_2 w_{14}$ & $w_1 w_7$ & $-a_1-5$ & $a_2$ & $a_3$ \\
$w_{8}$ & $(1\, 2)$ & $\left\{1 \right\}$ & $4$ & $8+2a_2$ & $w_2 w_{13}$ & $w_1 w_8$ & $-a_2-4$ & $a_1+1$ & $a_3$ \\ 
$w_{9}$ & $(1\, 3)$ & $\left\{1 \right\}$ & $4$ & $6+2a_3$ & $w_2 w_{18}$ & $w_4 w_{11}$ &$-a_3-3$ & $a_2$ & $a_1+2$\\ 
$w_{10}$ & $(2\, 3)$ & $\left\{1 \right\}$ & $6$ & $10+2a_1$ & $w_2 w_{17}$ & $w_4 w_{7}$ &$-a_1-5$ & $a_3-1$ & $a_2+1$\\ 
$w_{11}$ & $(1\, 2\, 3)$ & $\left\{1 \right\}$ & $3$ & $6+2a_3$ & $w_2 w_{16}$ & $w_1 w_{11}$ & $-a_3 - 3$ & $a_1 + 1$ & $a_2 + 1$\\ 
$w_{12}$ & $(3\, 2\, 1)$ & $\left\{1 \right\}$ & $5$ & $8+2a_2$ & $w_2 w_{15}$ &  $w_4 w_{8}$ & $- a_2 - 4$ & $a_3 - 1$ & $a_1+2$\\  
$w_{13}$ & $e$ & $\left\{2 \right\}$ & $3$ & $-2a_1$ & $w_1 w_{13}$ & $w_{13} w_{1}$ & $a_1$ & $-a_2-3$ & $a_3$ \\ 
$w_{14}$ & $(1\, 2)$ & $\left\{2 \right\}$ & $4$ & $2-2a_2$ & $w_1 w_{14}$ & $w_{13} w_{2}$  &$a_2-1$ & $-a_1-4$ &  $a_3$ \\ 
$w_{15}$ & $(1\, 3)$ & $\left\{2 \right\}$ & $4$ & $4-2a_3$ & $w_1 w_{15}$ & $w_{16} w_{5}$ & $a_3-2$ & $-a_2-3$ & $a_1 + 2$\\
$w_{16}$ & $(2\, 3)$ & $\left\{2 \right\}$ & $2$ & $-2a_1$ & $w_1 w_{16}$ & $w_{16} w_{1}$ & $a_1$ & $-a_3-2$ & $a_2 + 1$\\  
$w_{17}$ & $(1\, 2\, 3)$ & $\left\{2 \right\}$ & $5$ & $4-2a_3$ & $w_1 w_{17}$ & $w_{13} w_{5}$& $a_3 - 2$ & $- a_1 -4$ & $a_2+1$ \\ 
$w_{18}$ & $(3\, 2\, 1)$ & $\left\{2 \right\}$ & $3$ & $2-2a_2$ & $w_1 w_{18}$ & $w_{16} w_{2}$  & $a_2 - 1$ & $-a_3 - 2$ & $a_1 + 2$\\ 
$w_{19}$ & $e$ & $\left\{1, 2 \right\}$ & $8$ & $10+2a_1$ & $w_2 w_{20}$  & $w_{13} w_{7}$ & $-a_1-5$ & $-a_2-3$ & $a_3$ \\ 
$w_{20}$ & $(1\, 2)$ & $\left\{1, 2 \right\}$ & $7$ & $8+2a_2$ & $w_1 w_{20}$ & $w_{13} w_{8}$ & $-a_2-4$ & $-a_1 -4$ & $a_3$ \\
$w_{21}$ & $(1\, 3)$ & $\left\{1, 2 \right\}$ & $5$ & $6+2a_3$ & $w_1 w_{21}$ & $w_{16} w_{11}$ & $-a_3-3$ & $- a_2-3$ & $a_1 + 2$\\ 
$w_{22}$ & $(2\, 3)$ & $\left\{1, 2 \right\}$ & $7$ & $10+2a_1$ & $w_2 w_{23}$ & $w_{16} w_{7}$ & $-a_1-5$ & $-a_3-2$ & $a_2 + 1$\\ 
$w_{23}$ & $(1\, 2\, 3)$ & $\left\{1, 2 \right\}$ & $6$ & $6+2a_3$ & $w_1 w_{23}$ &  $w_{13} w_{11}$ & $-a_3 - 3$ & $-a_1-4$ &$a_2 + 1$ \\ 
$w_{24}$ & $(3\, 2\, 1)$ & $\left\{1, 2 \right\}$ & $6$ & $8+2a_2$ & $w_2 w_{21}$ & $w_{16} w_{8}$ & $- a_2 - 4$ & $-a_3 - 2$ & $a_1 + 2$ \\ 

\noalign{\smallskip}\hline

\end{tabular}
\end{table}

\begin{thm} \label{Thm1}
Let $V_\lambda$ be the finite dimensional irreducible representation of $\mathrm{GO}(2, 5)$ with highest weight $\lambda = a_1 \epsilon_1 + a_2 \epsilon_2 + a_3 \epsilon_3 + c \kappa$. One has:
\begin{enumerate}
\item If $a_2 \neq 0$ then there are no ghost classes in the cohomology space $H^\bullet(\partial \overline{S}, \V_\lambda)$.

\item If $a_2 = 0$ (which implies $a_3=0$ and therefore in terms of fundamental weights one has $\lambda = a_1 \varpi_1 + c \kappa$), then the only possible weights in the mixed Hodge structure of the space of ghost classes are the middle weight and the middle weight plus one.
\end{enumerate}
\end{thm}

\begin{proof}

By Lemma~\ref{mwelimination} and  the information in the Table~\ref{table1} one can see that the spaces $Ind_{\mathrm{P}_0}^{\mathrm{G}} H^0(S^{\mathrm{M}_0}, \widetilde{W}_{w_\ast(\lambda)})$ will not contribute to ghost classes for 
\[
w \in \left\{ w_1, w_4, w_6, w_{13}, w_{14}, w_{15}, w_{16}, w_{17}, w_{18} \right\}\,.
\]
On the other hand,  $w_2$ could contribute to ghost classes only if $a_2 = 0$ (which clearly implies $a_3 = 0$). $w_3$ and $w_5$ could contribute to ghost classes only if $a_3 = 0$.

Following Theorem ~\ref{GL2} and similar steps as the ones taken in Theorem~\ref{Thm2},
we continue with analyzing the image of $r_2:H^\bullet(\partial_2, \widetilde{V}_\lambda) \rightarrow H^\bullet(\partial_0, \widetilde{V}_\lambda)$. If $w \in \mathcal{W}^{\mathrm{P}_0}
$ is written as
$w = w^{\mathrm{P}_2/\mathrm{P}_0}w^{\mathrm{P}_2}$ with respect to the decomposition $\mathcal{W}^{\mathrm{P}_0} = \mathcal{W}_2^0 \mathcal{W}^{\mathrm{P}_2}$, then for 
\[
	w \in \left\{ w_5, w_{7}, w_{8}, w_{9}, w_{10}, w_{11}, w_{12}, w_{22} \right\}
\]
one has, $w^{\mathrm{P}_2/\mathrm{P}_0}=w_2 \neq e $, and its component in $\mathcal{W}^{\mathrm{P}_2}$ is, respectively, $w_4, w_{14} , w_{13}, w_{18}, w_{17}, w_{16}, w_{15}$ and $w_{23}$. For each of these $w^{\mathrm{P}_2}$, we can see, following the values of $n_1$ and $n_2$ in the expression $(w^{\mathrm{P}_2})_\ast(\lambda) = n_1\epsilon_1 + n_2\epsilon_2 + n_3\epsilon_3 +c\kappa$ encoded in the last three columns of the Table~\ref{table1}, that $n_1 > n_2$. By Theorem ~\ref{GL2}, this implies that the associated space $Ind_{\mathrm{P}_0}^{\mathrm{G}} H^0(S^{\mathrm{M}_0}, \widetilde{W}_{w_\ast(\lambda)})$ will be entirely contained in the image of $r_2$ and therefore this space cannot contribute to ghost classes. However, for $w_2, w_3, w_{19}, w_{24}$, we made the following observation. For $w_2$ and $w_{19}$, the corresponding space is not entirely contained in the image of $r_2$ only when $a_1 = a_2$ whereas  for $w_3$ and $w_{24}$ this will happen only when $a_2 = a_3$. 

In the case $w_{19}$, one has that the space $Ind_{\mathrm{P}_0}^{\mathrm{G}}  H^{0}(S^{\mathrm{M}_0}, \widetilde{W}_{(w_{19})_\ast(\lambda)})$ could contribute to ghost classes in degree $9$. On the other hand, the symmetric space associated to $\mathrm{G}$ has dimension $10$ and by Corollary $11.4.3$ in \cite{BoSe73}, $H^9(S, \widetilde{V}_\lambda) = 0$. As a conclusion $H^9(S_K, \widetilde{V}_\lambda) \rightarrow H^9(\partial \overline{S}_K, \widetilde{V}_\lambda)$ is the zero morphism and there are no ghost classes in degree $9$ cohomology. Therefore, $w_{19}$ does not contribute to ghost classes.

Therefore the only possible contributions to ghost classes come from the following six Weyl representatives 
\begin{equation}\label{eq:rw25}
	w \in \left\{ w_2, w_{3}, w_{20}, w_{21}, w_{23}, w_{24} \right\} \,. \nonumber
\end{equation}

\par We will study now each one of these cases to determine whether they could actually contribute to ghost classes and in that case what the possible weights in the corresponding mixed Hodge structure are. We do this by studying the image of $r_1:H^\bullet(\partial_{\mathrm{P}_1}, \V_\lambda) \rightarrow H^\bullet(\partial_{\mathrm{P}_0}, \V_\lambda)$ following the discussion of Subsection~\ref{r1}.

One has $s_1=w_{13}$, $w_\ast(\lambda) = n_1 \epsilon_1 + n_2 \epsilon_2 +n_3 \epsilon_3 + c \kappa$, $\rho= \frac{5}{2}\epsilon_1 + \frac{3}{2}\epsilon_2+\frac{1}{2}\epsilon_3$, $\mathrm{A}_{\mathrm{P}_0}^{\mathrm{P}_1} =\{ \epsilon_2\}$, $d_1=3$ and $-w(\lambda+\rho)=-(w_\ast(\lambda)+\rho)$. Then, in this case, $\Lambda_w^{\mathrm{P}_1}= -(n_2+\frac{3}{2})$ and the inequality in item $(a)$ of Theorem ~\ref{EisensteinCohomologyThm} is given by $-n_2 > 3$.

We see that for $w =  w_{20}$, its component in $\mathcal{W}^{\mathrm{P}_1/\mathrm{P_0}}$ with respect to the decompostion $\mathcal{W}^{\mathrm{P_0}}=\mathcal{W}^{\mathrm{P}_1/\mathrm{P_0}} \mathcal{W}^{\mathrm{P}_1}$ is $w_{13}$ ($\ell(w_{13}) = 3 > \frac{d_1}{2}$) and $w$ satisfies the condition $-n_2 > 3$. Thus, a direct application of item $(1)$ of Theorem ~\ref{EisensteinCohomologyThm} gives that $Ind_{\mathrm{P}_0}^{\mathrm{G}} H^0(S^{\mathrm{M}_0}, \widetilde{W}_{w_\ast(\lambda)})$ is contained in $Im(r_1)$ and therefore it does not contribute to ghost classes. By the same procedure, but for $w = w_{23}$, one can see that $Ind_{\mathrm{P}_0}^{\mathrm{G}} H^0(S^{\mathrm{M}_0}, \widetilde{W}_{(w_{23})_\ast(\lambda)})$ is contained in $Im(r_1)$ and therefore it does not contribute to ghost classes. On the other hand, the same calculations for $w_{21}$ show that this element could only contribute to ghost classes if $a_2=0$ (because in that case $-n_2 = a_2+3$).

Finally, we make use of item $(b)$ in Theorem \ref{EisensteinCohomologyThm}. For $w \in \mathcal{W}^{\mathrm{P}_1}$, the highest weight  $w_\ast(\lambda) = n_1 \epsilon_1 + n_2 \epsilon_2 +n_3 \epsilon_3 + c \kappa$  is regular for $\mathrm{M}_1$ if $n_2 > n_3 > 0$.

Assume $a_1 > a_2$. For $w = w_{21} = w_{16}w_{11} \in \mathcal{W}^{\mathrm{P}_1/\mathrm{P_0}} \mathcal{W}^{\mathrm{P}_1}$, one has $\ell(w_{16}) > \frac{d_1}{2}$ and $\Lambda_w^{\mathrm{P}_1} = a_2+\frac{3}{2}$. We will assume $a_2=0$, since we already proved that this element could only contribute to ghost classes in that case. On the other hand, under these assumptions,  $(w_{11})_\ast(\lambda)$ is regular. Therefore, if $a_1 > a_2 = 0$, all the hypothesis of item $(b)$ of the aforementioned theorem are satisfied. Thus, for every form $[\varphi] \in Ind_{\mathrm{P}_0}^{\mathrm{G}} H^0(S^{\mathrm{M}_0}, \widetilde{W}_{w_\ast(\lambda)})$, one has $r_1([E(\varphi, \Lambda_w^{\mathrm{P}_i})]) = [\varphi]$. This implies that $Ind_{\mathrm{P}_0}^{\mathrm{G}} H^0(S^{\mathrm{M}_0}, \widetilde{W}_{w_\ast(\lambda)})$ is contained in $Im(r_1)$. In conclusion, $w_{21}$ could contribute to ghost classes only if $a_1 = 0$. By the same procedure and using the already proved fact that $w_{24}$ can contribute to ghost classes only if $a_2=a_3$, one can show that $w_{24}$ can contribute to ghost classes only if $a_2=0$.

We now summarize the above discussion to point out the possible contribution of the space $Ind_{\mathrm{P}_0}^{\mathrm{G}} H^0(S^{\mathrm{M}_0}, \widetilde{W}_{{w}_\ast(\lambda)})$ for $w\in \mathcal{W}^{\mathrm{P}_0}$ to the ghost classes, as follows:

\begin{enumerate}

\item If $a_1 = a_2 = a_3 =0$ (\ie $V_\lambda$ is one dimensional) then the space $Ind_{\mathrm{P}_0}^{\mathrm{G}} H^0(S^{\mathrm{M}_0}, \widetilde{W}_{{(w_2)}_\ast(\lambda)})$ could contribute to ghost classes in degree $2$ and would have weight equal to the middle weight of $H^2(S, \widetilde{V}_\lambda)$.

\item If $a_2 = a_3 =0$ then the space $Ind_{\mathrm{P}_0}^{\mathrm{G}} H^0(S^{\mathrm{M}_0}, \widetilde{W}_{{(w_3)}_\ast(\lambda)})$ could contribute to ghost classes in degree $4$ and would have weight equal to the middle weight of $H^4(S, \widetilde{V}_\lambda)$.

\item If $a_1=a_2 = a_3 =0$ then the space $Ind_{\mathrm{P}_0}^{\mathrm{G}} H^0(S^{\mathrm{M}_0}, \widetilde{W}_{{(w_{21})}_\ast(\lambda)})$ could contribute to ghost classes in degree $6$ and would have weight equal to the middle weight of $H^6(S, \widetilde{V}_\lambda)$.

\item If $a_2 = a_3 =0$ then the space $Ind_{\mathrm{P}_0}^{\mathrm{G}} H^0(S^{\mathrm{M}_0}, \widetilde{W}_{{(w_{24})}_\ast(\lambda)})$ could contribute to ghost classes in degree $7$ and would have weight equal to the middle weight of $H^7(S, \widetilde{V}_\lambda)$ plus one.

\end{enumerate}

This completes the proof. 
 
\end{proof}

We conclude this section with the following corollary that follows from the proof of Theorem ~\ref{Thm1}.

\begin{coro} \label{Coro3}
Let $V_\lambda$ be the finite dimensional irreducible representation of $GO(2, 5)$ with highest weight $\lambda = n_1 \varpi_1 + n_2 \varpi_2 + n_3 \varpi_3 + c \kappa$. Then ghost classes can exist only if $n_2=n_3=0$, and in that case one has:
\begin{enumerate}
\item If $n_1 \neq 0$ then ghost classes can exist only in degree $4$ with middle weight and in degree $7$ with middle weight plus one.

\item If $n_1 = 0$ then ghost classes can exist only in degrees $2$, $4$ and $6$ with middle weight and in degree $7$ with middle weight plus one.
\end{enumerate}
\end{coro}

\section*{Acknowledgements}

The authors are grateful to the Mathematisches Institut, Georg-August Universit\"at G\"ottingen, and the Max Planck Institute for Mathematics (MPIM) in Bonn where
much of the discussion and work took place, for their support and hospitality. This work has been
initiated while both authors were postdoctoral fellows at MPIM. The authors would like to thank
G\"unter Harder for several discussions and encouragement during the writing of this article. The second author would like to thank Michael Harris and Roberto Miatello for their help and support.

\par Both authors would like to thank the anonymous referee for careful reading and for giving many constructive comments and useful suggestions which substantially helped in improving the presentation of the article. 

\nocite{}
\bibliographystyle{amsalpha}
\bibliography{GC}

\end{document}